\documentclass[pdflatex,sn-mathphys-num]{sn-jnl}

\usepackage{graphicx}%
\usepackage{multirow}%
\usepackage{amsmath,amssymb,amsfonts}%
\usepackage{amsthm}%
\usepackage{mathrsfs}%
\usepackage[title]{appendix}%
\usepackage{xcolor}%
\usepackage{textcomp}%
\usepackage{manyfoot}%
\usepackage{booktabs}%
\usepackage{algorithm}%
\usepackage{algorithmicx}%
\usepackage{algpseudocode}%
\usepackage{listings}%
\usepackage{float}

\newcommand{\bfU}{{\boldsymbol U}}

\newcommand{\DG}{\mathrm{DG}}

\theoremstyle{thmstyleone}%
\newtheorem{theorem}{Theorem}
\newtheorem{lemma}[theorem]{Lemma}%

\theoremstyle{thmstyletwo}%

\theoremstyle{thmstylethree}%

\begin{document}

\title[Article Title]{Numerical Analysis of a Coupled 3D-1D Transport Problem}


\author*[1]{\fnm{Alyssa M.} \sur{Taylor-LaPole}}\email{alyssa.lapole@rice.edu}

\author[1]{\fnm{Uzochi} \sur{Gideon}}\email{uzochi.gideon@rice.edu}

\author[1]{\fnm{Beatrice} \sur{Riviere}}\email{riviere@rice.edu}

\author[1]{\fnm{Duygu} \sur{Vargun}}\email{duygu.vargun@rice.edu}

\affil[1]{\orgdiv{Department of Computational Applied Mathematics \& Operations Research}, \orgname{Rice University}, \orgaddress{\street{6100 Main Street}, \city{Houston}, \postcode{77005}, \state{TX}, \country{USA}}}


\abstract{A finite element solution coupled with an interior penalty discontinuous Galerkin solution are defined for the approximation of the coupled 3D-1D solute transport problem. 
Under sufficient regularity for the weak solutions, optimal error bounds are derived for the 3D concentration and 1D concentration, that are optimal with respect to the time step size and the mesh sizes. Numerical results verify the theoretical results.}

\keywords{finite element method, interior penalty discontinuous Galerkin method, well-posedness, error bounds}



\maketitle


\section{Introduction}\label{sec1}



 Coupled 3D–1D partial differential equations are primarily used to model multiscale problems in which  thin, lower-dimensional structures are embedded within a three-dimensional bulk domain. These models are applied across various scientific and engineering disciplines, ranging from biomedical and physiological sciences \cite{cattaneo2014computational, notaro2016method,koppl20203d,fritz20221d} to geosciences and material sciences \cite{gjerde2020singularity, malenica2018groundwater,cerroni2019mathematical,gjerde2018well,llau2016concrete}, to address problems characterized by significant disparities in the scales of the domains that constitute the model.
Mixed dimensional models avoid the prohibitively high cost of creating a 3D mesh fine enough to resolve the tiny radius of a capillary or well within a large volume, by reducing thin 3D inclusions to 1D manifolds via topological model reduction techniques proposed \cite{dAngelo2008coupling,Zunino19}.



In this work, we formulate and analyze a numerical method to solve the coupled 3D-1D time-dependent convection diffusion equations, introduced in \cite{masri2024modelling}. Solute exchange between the 1D and 3D domains is made possible through a flux that is proportional to a permeability function and the difference between the
1D concentration and the lateral average of the 3D concentration.  Our scheme combines the backward Euler method in time with the finite element method and with the interior penalty discontinuous Galerkin method in space. Optimal error bounds are derived for the concentration of the solute.
Our analysis is valid in the general case where the permeability of the inclusion wall is a non negative function along the 1D domain; this allows for parts of the wall of the blood vessel  (or wall of the inclusion) to be impermeable. To our knowledge, our work is the first error analysis of the 3D-1D transport problems. 

The numerical analysis of coupled 3D-1D flows is introduced in \cite{Zunino19} where the elliptic problem is discretized by the finite element method. Convergence is obtained by deriving a priori error estimates.  The authors derive the 3D-1D mixed dimensional model from
the fully 3D-3D model and they show that the 3D-1D model has a modeling error that depends on the radius of the tubular inclusion.  The coupled 3D-1D elliptic problem is discretized by the interior penalty discontinuous Galerkin method in \cite{Masri24}. Because of the coupling term involving the 1D centerline, the 3D solution exhibits low regularity and consistency of the DG scheme does not hold.
The case of numerical discretization of the time-dependent coupled 3D-1D problems has not been studied in the literature. As mentioned above, the work \cite{masri2024modelling}  contains the derivation of the 3D-1D transport problem by starting from the 3D-3D models of solute transport and by applying a similar topological reduction to the one in \cite{Zunino19}.  The authors also show that the modeling error decreases with the inclusion diameter.

An outline of the paper is as follows. We first introduce the model problem and the scheme in Section~\ref{sec:model} and Section~\ref{sec:formulation}. Existence, uniqueness
and stability bounds are obtained in Section~\ref{sec:well}.  Convergence is proved by deriving error bounds in the following section.  Numerical results are given in Section~\ref{sec:numerics} and conclusions follow.

\section{Model problem}
\label{sec:model}
\noindent Let $\Omega\subset\mathbb{R}^3$ be a bounded Lipschitz domain and let $\Lambda$ be $\mathcal{C}^2$ curve entirely contained in $\Omega$. We parametrize  $\Lambda$ by the function $\lambda(s)$ with $s\in[0,L]$.  We assume $\lambda$ to be the centerline of a generalized cylinder $\Sigma_\Lambda$ with boundary $\Gamma$. Let $\gamma$ be the permeability of the boundary. 
At each point $s$ on $\Lambda$, the cross-section is denoted by $\mathcal{D}(s)$ with  area $|\mathcal{D}(s)|$ and circumference $|\partial\mathcal{D}(s)|$. 

We model the transport of a solute by defining $c$ and $\hat c$ the concentrations of the solute in $\Omega$ and $\Lambda$ respectively over the time interval $[0,T]$. The mass balance equations are:
\begin{align}
\frac{\partial c}{\partial t} & - \nabla \cdot (\kappa \nabla c) +\nabla \cdot (\bfU c) 
+ \gamma (\overline{c} - \hat{c}) \delta_\Lambda = f, \,\, \mbox{in}\,\Omega\times (0,T),\label{eq:3D}\\
|\mathcal{D}(s)|\frac{\partial \hat c}{\partial t} & - \frac{\partial}{\partial s}\left(|\mathcal{D}(s)|\hat \kappa \frac{\partial\hat c}{\partial s}\right) +\frac{\partial}{\partial s}\left(|\mathcal{D}(s)| \hat U \hat c\right) + |\partial\mathcal{D}(s)|\gamma (\hat c-\overline{c}) 
= \hat f, \,\,\mbox{in}\,\Lambda\times (0,T).\label{eq:1D}
\end{align}
The interaction between the 1D curve and the 3D domain is done with the term $\gamma (\overline{c}-\hat c)\delta_\Lambda$, where $\delta_\Lambda$ is the Dirac function on $\Lambda$. It is to be understood in the weak sense as follows: for any function $v\in H^1(\Omega)$:
\[
\int_\Omega \gamma (\overline{c}-\hat c)\delta_\Lambda v
= \int_{\Lambda} \vert \partial \mathcal{D}(s)\vert (\overline{c}-\hat c) \overline{v}.
\]
The functions $\overline{c}$ and $\overline{v}$ denote the lateral average of $c$ and $v$ respectively. For any $0\leq s \leq L$ and any function $w\in L^1(\partial\mathcal{D}(s))$, we define
\[
\overline{w}(s) = \frac{1}{\vert \partial \mathcal{D}(s)\vert} \int_{\partial\mathcal{D}(s)} w.
\]
In the model above, $\kappa$ (resp. $\hat \kappa$) denote the diffusion coefficient in $\Omega$ (resp. $\Lambda$), $f$ (resp. $\hat f$) denote the source/sink function in $\Omega$ (resp. $\Lambda$), and $\bfU$
(resp. $\hat U$) denote the velocity field in $\Omega$ (resp. $\Lambda$).

The initial conditions are:
\begin{equation}\label{eq:initC}
c(\cdot,0) = c^0(\cdot), \quad \mbox{in}\, \Omega,\quad
\hat c(\cdot,0) = \hat c^0(\cdot), \quad \mbox{in}\,
\Lambda.
\end{equation}
We impose zero Dirichlet boundary conditions on the 3D domain
\begin{equation}
c = 0 \text{ on } \partial\Omega\times (0,T),
\end{equation}
and we impose inflow and outflow boundary conditions at the inlet ($s=0$) and outlet ($s=L$) of $\Lambda$. At the inlet, $c^\textrm{in}$ denotes a prescribed concentration.
\begin{align}
    |\mathcal{D}|\hat \kappa \frac{\partial\hat c}{\partial s} - |\mathcal{D}| \hat U \hat c = 
    - |\mathcal{D}| \hat U   c^{\mathrm{in}}, \quad
    \mbox{at} \quad s=0, \label{eq:bchat1}\\
    |\mathcal{D}|\hat \kappa \frac{\partial\hat c}{\partial s}  = 0, \quad
    \mbox{at} \quad s=L.
    \label{eq:bchat2}
\end{align}

The following assumptions are made on the data.
\begin{enumerate}
    \item[(i)] The generalized cylinder does not collapse: there are positive constants $d_0$ and $d_1$ such that
\begin{equation}\label{eq:Dbounded}
    \forall 0\leq s \leq L, \quad
    d_0 \leq \vert \mathcal{D}(s) \vert \leq d_1, \quad
    d_0 \leq \vert \partial\mathcal{D}(s) \vert \leq d_1.
    \end{equation}
    The function $s\mapsto \vert \mathcal{D}(s)\vert $ is continuous and piecewise differentiable with non-negative derivative.
    If $\Sigma_\Lambda$ is a cylinder with  constant radius, these conditions are trivially satisfied.
    \item[(ii)] The functions $\kappa, \hat \kappa$ are $L^\infty$ functions, bounded above and below by positive constants. The function $\gamma\in L^\infty(\Gamma)$ is non-negative and bounded.
    \[
    0< k_0 \leq \kappa(x) \leq k_1, \quad
    0< \hat k_0 \leq \hat \kappa (s) \leq \hat k_1, \quad
    0\leq \gamma \leq \gamma_1.
    \]
    \item[(iii)] The velocities are independent of time. In $\Lambda$, the velocity $\hat U>0$ is a positive constant. In $\Omega$, we make some assumptions on  the advection field $\bfU$, that depend on the divergence of $\bfU$.  For the analysis below, we differentiate the two cases by introducing a parameter $\eta_\bfU = 0,1$. 
    \begin{align}
    \mbox{Case 1:}&\quad \nabla \cdot \bfU = 0, \quad \eta_\bfU = 0,\\
    \mbox{Case 2:}&\quad \Vert \bfU \Vert_{L^\infty(\Omega)} \leq \frac{k_0}{2 C_0}, \quad \eta_\bfU = 1.\label{eq:condU}
    \end{align}
    The constant $C_0$ is the Poincar\'e's constant:
    \begin{equation}
\label{eq:poincare}
\forall v\in H^1_0(\Omega), \quad 
\Vert v \Vert_{L^2(\Omega)} \leq C_0 \Vert \nabla v \Vert_{L^2(\Omega)}.
\end{equation}
    \item[(iv)] The source/sink functions  and prescribed concentration are continuous in time: $f\in \mathcal{C}(0,T;L^2(\Omega))$, $\hat f \in \mathcal{C}(0,T;L^2(\Lambda))$ and $c^\mathrm{in}\in \mathcal{C}(0,T)$.
    \item[(v)] The initial conditions satisfy:  $c^0 \in H^1(\Omega)$   and $\hat c^0 \in L^2(\Lambda)$.
\end{enumerate}


A weak solution $(c,\hat c)$ is defined such that
$(c,\hat c)$ satisfies \eqref{eq:initC}, 
$c\in L^2(0,T;H_0^1(\Omega))$, $\hat c \in L^2(0,T;H^1(\Lambda))$ with $\partial_t c \in L^2(0,T;L^2(\Omega))$, $\partial_t \hat c \in L^2(0,T;L^2(\Lambda))$ and such that  for all $v\in H_0^1(\Omega)$ and $\hat v \in H^1(\Lambda)$:
\begin{align*}
    \int_\Omega\frac{\partial c}{\partial t}  v
      +   \int_\Omega  \kappa \nabla c \cdot \nabla v 
      - \int_\Omega \bfU c \cdot \nabla v
      +\int_\Lambda\gamma \vert \partial\mathcal{D}\vert \left(\overline{c}-\hat{c}\right)\overline{v}
        = \int_\Omega f v,\\
    \int_\Lambda |\mathcal{D}| \frac{\partial \hat c}{\partial t} \hat v
    + \int_\Lambda \vert \mathcal{D}\vert \hat \kappa \frac{\partial \hat c}{\partial s} \frac{\partial \hat v}{\partial s} 
    -\int_{\Lambda} |\mathcal D|\,\hat U\,\hat c \,\frac{\partial \hat v}{\partial s}
    +\int_\Lambda \gamma|\partial\mathcal{D}|\left(\hat c-\overline{c}\right)   \hat v \nonumber\\
    + |\mathcal{D}(L)|\hat U \, \hat c(L) \hat v(L)
   =     |\mathcal{D}(0)|\hat U \, c^{\text{in}} \hat v(0) + \int_\Lambda \hat f \hat v.
\end{align*}
The proof of existence and uniqueness of a weak solution follows a similar argument to the one in \cite{masri2024modelling} (Proposition 4.2).

\section{Numerical scheme}
\label{sec:formulation}
\noindent The proposed scheme employs the finite element method to solve \eqref{eq:3D} and the interior penalty discontinuous Galerkin (IPDG) method to solve \eqref{eq:1D}. The time-stepping method is the backward Euler method.
The scheme is defined in this section and its derivation is given in Appendix~\ref{sec:appb}.

Let $\tau>0$ denote the time step value and
let $t^n = n \tau$ denote the discrete time value for any $n\geq 0$.
The domain $\Omega$ is partitioned into tetrahedra elements $ K \in \mathcal{T}^h_\Omega$ . The mesh size is defined by $h_\Omega = \max\limits_{K \in \mathcal{T}^h_\Omega} \mathrm{diam}(K)$. Let $V^\Omega_h$ be the finite dimensional space of continuous piecewise polynomials of degree $k_1\geq 1$ that vanish on the boundary:
\[
V^\Omega_h = \{v_h \in H_0^1(\Omega), \, v_h \in \mathbb{P}_{k_1}(K) \ \forall K \in \mathcal{T}^h_\Omega\}.
\]
Using the parametrization $\lambda$, we partition the curve $\Lambda$ by defining a partition $0=s_0<s_1<\dots<s_N=L$ denoted by $\mathcal{T}_\Lambda^h$,  with mesh size $h_\Lambda$.  
Let $H^1(\mathcal{T}^h_\Lambda)$ be the space of broken $H^1$ functions on $\Omega$ and $V^\Lambda_h$ be the finite dimensional space of discontinuous piecewise polynomials of degree $k_2:$
\[
V^\Lambda_h = \{\hat{v}_h \in L^2(\Lambda), \, \hat{v}_h \in \mathbb{P}_{k_2}((s_{i-1},s_i)) \ \forall i = 1, .., N\}.
\] 
The jump and average operators are defined across interior nodes $s_i$ for $1\leq i \leq N-1$ as, for any function $\hat v\in H^1(\mathcal{T}_\Lambda^h)$:
\begin{align*}
  \hat v(s_i^-) =  \lim_{\epsilon \rightarrow0, \epsilon>0} \hat v(s_i-\epsilon),\ \   \hat v(s_i^+) =  \lim_{\epsilon \rightarrow0, \epsilon>0} \hat v(s_i+\epsilon),\\
[\hat v]_{s_i}= \hat v(s_i^-) -\hat v(s_i^+), \quad \{\hat v\}_{s_i}=\frac{1}{2}\left(\hat v(s_i^-) +\hat v(s_i^+)\right).
\end{align*}
Let $\sigma>0$ denote the penalty parameter used in the IPDG scheme. 
We define a semi-norm for the space $V_h^\Lambda$:
\[
\vert \hat v_h \vert_{\DG} = \left(
\sum_{i=1}^N \Vert \frac{\partial \hat v_h}{\partial s}\Vert_{L^2([s_{i-1},s_i])}^2
+ \frac{\sigma}{h_\Lambda} \sum_{i=1}^{N-1} [\hat v_h]_{s_i}^2 \right)^{1/2}.
\]
We now present the scheme. 
For all $n\geq 1$, given $(c_h^{n-1}, \hat c_h^{n-1})\in V_h^\Omega\times V_h^\Lambda$, find $(c_h^{n}, \hat c_h^{n}) \in V_h^\Omega\times V_h^\Lambda$ such that:
\begin{align}
    \frac{1}{\tau}\int_\Omega (c_h^{n}-c_h^{n-1}) v_h
      +  & \int_\Omega  \kappa \nabla c_h^{n} \cdot \nabla v_h  
       - \int_\Omega \bfU c_h^{n} \cdot \nabla v_h
   \nonumber\\
      +&\int_\Lambda\gamma \vert \partial\mathcal{D}\vert \left(\overline{c_h^{n}}-\hat{c}_h^{n}\right)\overline{v_h}
        = \int_\Omega f(\cdot,t^{n}) v_h, \quad \forall v_h \in V_h^\Omega,\label{eq:FEMscheme}\\
    \frac{1}{\tau} \int_\Lambda |\mathcal{D}|(\hat c_h^{n}- \hat c_h^{n-1}) \hat v_h 
    + & a_\Lambda(\hat c_h^{n}, \hat v_h)
    + b_\Lambda(\hat c_h^{n}, \hat v_h)
    +\int_\Lambda \gamma|\partial\mathcal{D}|\left(\hat c_h^{n}-\overline{c_h^{n}}\right)   \hat v_h
    \nonumber\\
   =  &  \int_\Lambda \hat f(\cdot, t^{n}) \hat v_h
   + |\mathcal{D}(0)|\hat U \, c^{\mathrm{in}}(t^n) \hat v_h(0),
   \quad \forall \hat v_h \in V_h^\Lambda,\label{eq:DGscheme}
\end{align}
where $a_\Lambda(\cdot,\cdot)$ and $b_\Lambda(\cdot,\cdot)$  are defined as follows for any $\hat \xi, \hat v$:
\begin{align*}
  a_\Lambda(\hat \xi, \hat v) = & \sum_{i=1}^N\int_{s_{i-1}}^{s_i}|\mathcal{D}|\hat\kappa \frac{\partial\hat \xi}{\partial s}\frac{\partial \hat v}{\partial s}
    - \sum_{i=1}^{  N-1 } \{|\mathcal{D}|\hat \kappa\frac{\partial \hat \xi}{\partial s}\}_{s_i} \, [\hat v]_{s_i} 
    \\ & -   \epsilon   \sum_{i=1}^{  N-1  } \{|\mathcal{D}|\hat \kappa \frac{\partial \hat v}{\partial s}\}_{s_i} \, [\hat \xi]_{s_i}
    +\sum_{i=1}^{N-1} \frac{\sigma}{h_\Lambda}[\hat \xi]_{s_i} \, [\hat v]_{s_i},  \\  
   b_\Lambda(\hat \xi, \hat v) = &
    -\sum_{i=1}^{N} \int_{s_{i-1}}^{s_i}|\mathcal{D}|\hat U \, \hat \xi \frac{\partial \hat v}{\partial s} 
    +\sum_{i=1}^{  N-1 } \vert\mathcal{D}(s_i)\vert
    \hat U \hat \xi(s_i^-)  [\hat v]_{s_i} 
     +|\mathcal{D}(L)| \hat U \, \hat \xi(L) \hat v(L).
\end{align*}
We remark that $a_\Lambda$ is the standard IPDG discretization of the diffusion operator and $b_\Lambda$ is a slightly modified discretization of the advection operator, that takes into account the weight $\vert \mathcal{D}\vert$.  The parameter $\epsilon$ in $a_\Lambda$ takes the value $-1$, $0$ or $+1$ to yield the classical variations of the IPDG method.

To start the scheme, we choose
\[
c_h^0 = \Pi c^0, \quad \hat c_h^0 = \pi \hat c^0,
\]
where $\Pi c^0$ is the Scott-Zhang interpolant of $c^0$ \cite{ScottZhang} and $\pi \hat c^0$ is the L$^2$ projection of $\hat c^0$ onto $V_h^\Lambda$. 

Throughout the paper, $M$ will denote a generic positive constant that is independent of $h$ and $\tau$ and takes different values at different places.

\section{Well-posedness}
\label{sec:well}

Before we state and prove existence and uniqueness of the discrete solution, we present some important properties of the bilinear forms $a_\Lambda$ and $b_\Lambda$. Throughout the paper, we assume that if $\epsilon = 1, 0$ the penalty parameter $\sigma$ is large enough, and if $\epsilon = -1$ the penalty parameter is chosen $\sigma = 1$. Under this assumption, coercivity of $a_\Lambda$ holds (see proof in Appendix~\ref{sec:appd}): there is a constant $C_a>0$ such that 
\begin{equation}\label{eq:coercivity}
\forall \hat v \in V_h^\Lambda, \quad
C_a \vert \hat v\vert_{\DG}^2 \leq a_{\Lambda}(\hat v, \hat v).
\end{equation}
The proof of the positivity of $b_\Lambda$ is provided in Appendix~\ref{sec:appc}. 
\begin{equation}
    \forall \hat v \in V_h^\Lambda, \quad 
\frac12 \sum_{i=1}^{N-1}  \vert \mathcal{D}(s_i)\vert \, \hat U  [\hat v]_{s_i}^2
+ \frac12 \vert \mathcal{D}(0)\vert \hat U (\hat v(0))^2
 +\frac12 \vert \mathcal{D}(L)\vert \hat U (\hat v(L))^2
\leq b_{\Lambda}(\hat v, \hat v).\label{eq:bpositivity}
\end{equation}
\begin{lemma}
    For each $n\geq 1$, there exists a unique solution $(c_h^{n}, \hat c_h^{n})\in V_h^\Omega\times V_h^\Lambda$ satisfying \eqref{eq:FEMscheme}-\eqref{eq:DGscheme}.   In addition, 
    if $\tau \leq 1/2$, there is a constant $M>0$ independent of $h$ and $\tau$ such that
\begin{align}
\forall m\geq 1, \quad 
    \Vert c_h^m\Vert_{L^2(\Omega)}^2 
    & +d_0 \Vert \hat c_h^n\Vert_{L^2(\Lambda)}^2 
      +  \tau k_0 \sum_{n=1}^m \Vert \nabla c_h^n \Vert_{L^2(\Omega)}^2
      + \tau C_a \sum_{n=1}^m \vert \hat c_h^n \vert_{\DG}^2 \nonumber\\
        \leq  
        & \, M \tau \sum_{n=1}^m \Vert f(\cdot,t^n)\Vert_{L^2(\Omega)}^2
   + M \tau \sum_{n=1}^m \Vert \hat f(\cdot,t^n) \Vert_{L^2(\Lambda)}^2 
   \nonumber\\
&   + M |\mathcal{D}(0)|\hat U \, \tau \sum_{n=1}^{m}(c^{\mathrm{in}}(t^n))^2
+ C \Vert c^{0}\Vert_{H^s(\Omega)}^2 
+ M \Vert \hat c^{0}\Vert_{H^r(\Lambda)}^2.
\label{eq:stability}
\end{align}
\end{lemma}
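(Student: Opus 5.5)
Since the scheme \eqref{eq:FEMscheme}--\eqref{eq:DGscheme} is, at each time step, a square linear system in the finite-dimensional space $V_h^\Omega\times V_h^\Lambda$, existence follows from uniqueness. For uniqueness it suffices to show that zero data ($c_h^{n-1}=0$, $\hat c_h^{n-1}=0$, $f=\hat f=0$, $c^{\mathrm{in}}=0$) forces a zero solution. Both the uniqueness argument and the stability bound come from one energy identity. We test \eqref{eq:FEMscheme} with $v_h=c_h^n$ and \eqref{eq:DGscheme} with $\hat v_h=\hat c_h^n$, then add the two equations.

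The coupling terms combine into
\[
\int_\Lambda \gamma|\partial\mathcal{D}|\left(\overline{c_h^n}-\hat c_h^n\right)^2\geq 0,
\]
using that $\hat c_h^n$ is constant on each cross-section. The diffusion terms are bounded below by $k_0\Vert\nabla c_h^n\Vert^2_{L^2(\Omega)}$ and, by \eqref{eq:coercivity}, by $C_a|\hat c_h^n|_{\DG}^2$. The 1D advection term is nonnegative by \eqref{eq:bpositivity}, and it also supplies the boundary quantity $\frac12|\mathcal{D}(0)|\hat U(\hat c_h^n(0))^2$.

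The 3D advection term $-\int_\Omega \bfU c_h^n\cdot\nabla c_h^n$ is handled by cases:
\begin{itemize}
\item In Case 1 it equals $\frac12\int_\Omega(\nabla\cdot\bfU)(c_h^n)^2=0$, by integration by parts and $c_h^n\in H^1_0(\Omega)$.
\item In Case 2 it is bounded by $\Vert\bfU\Vert_{L^\infty}C_0\Vert\nabla c_h^n\Vert^2\le \frac{k_0}{2}\Vert\nabla c_h^n\Vert^2$, using \eqref{eq:poincare} and \eqref{eq:condU}. Half of the diffusion then survives.
\end{itemize}
With zero data this gives $\Vert c_h^n\Vert^2+\int|\mathcal D|(\hat c_h^n)^2\le0$, which proves uniqueness.

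For stability we use $(a-b)a=\frac12(a^2-b^2+(a-b)^2)$ on the time-difference terms, with the weight $|\mathcal{D}|\ge d_0$ from \eqref{eq:Dbounded}. The right-hand sides are estimated as follows:
\begin{itemize}
\item $\int f c_h^n\le \frac12\Vert f\Vert^2+\frac12\Vert c_h^n\Vert^2$.
\item $\int\hat f\hat c_h^n\le \frac{1}{2d_0}\Vert\hat f\Vert^2+\frac12\int|\mathcal D|(\hat c_h^n)^2$.
\item $|\mathcal{D}(0)|\hat U c^{\mathrm{in}}\hat c_h^n(0)\le |\mathcal{D}(0)|\hat U(c^{\mathrm{in}})^2+\frac14|\mathcal{D}(0)|\hat U(\hat c_h^n(0))^2$, whose last part is absorbed by the inflow term of \eqref{eq:bpositivity}.
\end{itemize}
Alternatively the first two could be bounded via Poincaré and a discrete Poincaré inequality for $|\cdot|_{\DG}$, but the $L^2$ splitting is simpler.

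Multiplying by $2\tau$ and summing over $n=1,\dots,m$ leaves terms $\tau\Vert c_h^m\Vert^2$ and $\tau\int|\mathcal D|(\hat c_h^m)^2$ on the right-hand side. Since $\tau\le1/2$, these are absorbed into the left-hand side, with coefficient at least $1-\tau\ge 1/2$. The remaining terms $\tau\sum_{n<m}$ are handled by the discrete Gronwall lemma, giving constants $M$ of the form $e^{MT}$.

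Finally, the initial data must be controlled. We use $\Vert\Pi c^0\Vert_{L^2}\le M\Vert c^0\Vert_{H^1(\Omega)}$ from the stability of the Scott–Zhang interpolant, and $\Vert\pi\hat c^0\Vert_{L^2}\le\Vert\hat c^0\Vert_{L^2}$, which matches the $H^s$ and $H^r$ norms in \eqref{eq:stability} for $s=1$ and $r=0$ (or any larger $s,r$).

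The main obstacle is purely bookkeeping. The sign of the coupling term must be checked via the identity $\int_\Lambda|\partial\mathcal D|\,\overline{c}\,\hat c=\int_\Lambda|\partial\mathcal D|\,\overline{c}\,\overline{\hat c}$. The inflow boundary contribution must also be absorbed correctly, which is exactly why the $\frac12|\mathcal D(0)|\hat U(\hat v(0))^2$ term appears in \eqref{eq:bpositivity}.
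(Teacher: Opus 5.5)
Your proposal is correct and follows essentially the same route as the paper. For uniqueness you test the homogeneous system with the difference of two solutions. For stability you test with $(c_h^n,\hat c_h^n)$, use coercivity of $a_\Lambda$, use positivity of $b_\Lambda$ to absorb the inflow term after Young's inequality, and treat the 3D advection in two cases to keep $k_0/2$. You then multiply by $2\tau$, sum, absorb the $n=m$ terms using $\tau\le 1/2$, apply discrete Gronwall, and control the initial data by stability of the Scott--Zhang interpolant and of the $L^2$ projection.

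One point where you are more careful than the paper: you keep the $|\mathcal{D}|$-weighted $L^2$ norm of $\hat c_h^n$ through the telescoping and Gronwall step and only use $|\mathcal{D}|\ge d_0$ at the end. The paper instead replaces $\Vert |\mathcal{D}|^{1/2}\hat c_h^n\Vert^2-\Vert |\mathcal{D}|^{1/2}\hat c_h^{n-1}\Vert^2$ by $d_0(\Vert \hat c_h^n\Vert^2-\Vert\hat c_h^{n-1}\Vert^2)$, which is not a valid lower bound in general.
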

\begin{proof}
Assume that $n\geq 1$.
Since the problem is linear and finite-dimensional, it suffices to show uniqueness of the solutions.  Assume that $(w,\hat w) = (c_h^{n,1}-c_h^{n,2}, \hat c_h^{n,1}-\hat c_h^{n,2})$, is the pair of differences between two solution pairs. It suffices to show $(w,\hat w) = (0,0)$. By linearity, we have for all $(v_h,\hat v_h)\in V_h^\Omega\times V_h^\Lambda$
    \begin{align*}
    \frac{1}{\tau}\int_\Omega w v_h
      +   \int_\Omega  \kappa \nabla w \cdot \nabla v_h  
      - \int_\Omega \bfU w \cdot \nabla v_h
      +\int_\Lambda\gamma \vert \partial\mathcal{D}\vert \left(\overline{w}-\hat{w}\right)\overline{v_h}
        = 0,\\
      \frac{1}{\tau} \int_\Lambda |\mathcal{D}|\hat w \hat v_h 
    + a_\Lambda(\hat w, \hat v_h)
    + b_\Lambda(\hat w, \hat v_h)
    +\int_\Lambda \gamma|\partial\mathcal{D}|\left(\hat w-\overline{w}\right)   \hat v_h
   =    0.
\end{align*}
We add the two equations and choose $(v_h,\hat v_h) = (w, \hat w)$. Using the coercivity of $a_\Lambda$ and the positivity of $b_\Lambda$, we have
\begin{align*}
    \frac{1}{\tau}\Vert w \Vert_{L^2(\Omega)}^2
    + \Vert \kappa^{1/2} \nabla w \Vert_{L^2(\Omega)}^2  
      - \int_\Omega \bfU w \cdot \nabla w
      +\Vert \gamma^{1/2} \vert \partial\mathcal{D} 
      \vert^{1/2}  (\overline{w}-\hat{w})\Vert_{L^2(\Lambda)}^2
    \\
      +\frac{1}{\tau} \Vert  |\mathcal{D}|^{1/2} \hat w \Vert_{L^2(\Lambda)}^2 
    + C_a \vert \hat w \vert_{\DG}^2
    \leq 0.
\end{align*}
In the case where $\nabla \cdot \bfU = 0$, we have
\[
\int_\Omega \bfU w \cdot \nabla w = 0,
\]
so the third term above disappears.  It is then easy to conclude that $w=\hat w = 0$ by using the coercivity of $a_\Lambda$ and the positivity of $b_\Lambda$.
In the case where $\bfU$ is non-divergence free, we 
write with Poincar\'e's inequality
\begin{align*}
\int_\Omega \bfU w \cdot \nabla w \leq & \Vert \bfU \Vert_{L^\infty(\Omega)} \Vert w \Vert_{L^2(\Omega)} \Vert \nabla w \Vert_{L^2(\Omega)}\\
\leq & C_0 \Vert \bfU \Vert_{L^\infty(\Omega)} \Vert \nabla w \Vert_{L^2(\Omega)}^2
\end{align*}
Therefore we have 
\[
(k_0 - C_0 \Vert \bfU \Vert_{L^\infty(\Omega)}) \Vert \nabla w \Vert_{L^2(\Omega)}^2 \leq 
\Vert \kappa^{1/2} \nabla w \Vert_{L^2(\Omega)}^2
- \int_\Omega \bfU w \cdot \nabla w.
\]
Since $\bfU$ satisfies \eqref{eq:condU},
it is then easy to conclude that $w = \hat w  = 0$. Thus, we have proved uniqueness, which is equivalent to proving existence.

Next, we show the stability bound \eqref{eq:stability}. We choose $(v_h,\hat v_h) = (c_h^n, \hat c_h^n)$ in \eqref{eq:FEMscheme}-\eqref{eq:DGscheme}. We sum the resulting equations and use coercivity of $a_\Lambda$ and positivity of $b_\Lambda$ to obtain:
\begin{align}
    \frac{1}{\tau}\int_\Omega (c_h^{n}-c_h^{n-1}) c_h^n
      + \frac{1}{\tau} \int_\Lambda |\mathcal{D}|(\hat c_h^{n}- \hat c_h^{n-1}) \hat c_h^n 
      +  (k_0 - C_0\eta_\bfU \Vert \bfU\Vert_{L^\infty(\Omega)}) \Vert \nabla c_h^n \Vert_{L^2(\Omega)}^2
      \nonumber\\
      + C_a \vert \hat c_h^n \vert_{\DG}^2 
      +\Vert \gamma^{1/2} \vert \partial\mathcal{D}\vert^{1/2} \left(\overline{c_h^{n}}-\hat{c}_h^{n}\right)\Vert_{L^2(\Lambda)}^2
%
+ \frac12 \vert \mathcal{D}(0)\vert \hat U (\hat c_h^n(0))^2
%
\nonumber\\
        \leq  \int_\Omega f(\cdot,t^{n}) c_h^n
   + \int_\Lambda \hat f(\cdot, t^{n}) \hat c_h^n
   + |\mathcal{D}(0)|\hat U \, c^{\mathrm{in}}(t^n) \hat c_h^n(0).\label{eq:apriori1}
\end{align}
Next we observe that for the two cases we consider, we have
\begin{equation}\label{eq:twocases}
k_0 - C_0\eta_\bfU \Vert \bfU\Vert_{L^\infty(\Omega)} \geq \frac{k_0}{2}.
\end{equation}
Using Cauchy-Schwarz's inequality we have
\[
\int_\Omega f(\cdot,t^{n}) c_h^n
   + \int_\Lambda \hat f(\cdot, t^{n}) \hat c_h^n
   \leq \frac12 \Vert c_h^n\Vert_{L^2(\Omega)}^2
   + \frac12 \Vert \hat c_h \Vert_{L^2(\Lambda)}^2
   +\frac12 \Vert f(\cdot,t^n)\Vert_{L^2(\Omega)}^2
   + \frac12 \Vert \hat f(\cdot,t^n) \Vert_{L^2(\Lambda)}^2.
\]
We also have with Young's inequality:
\[
|\mathcal{D}(0)|\hat U \, c^{\mathrm{in}}(t^n)  \hat c_h^n(0)
\leq\frac14 |\mathcal{D}(0)|\hat U \,  (\hat c_h^n(0))^2
+ |\mathcal{D}(0)|\hat U \, (c^{\mathrm{in}}(t^n))^2.
\]

Therefore the bound \eqref{eq:apriori1} becomes
\begin{align}
\frac{1}{2\tau}(\Vert c_h^n\Vert_{L^2(\Omega)}^2 - \Vert c_h^{n-1}\Vert_{L^2(\Omega)}^2) 
+\frac{d_0}{2\tau}(\Vert \hat c_h^n\Vert_{L^2(\Lambda)}^2 - \Vert \hat c_h^{n-1}\Vert_{L^2(\Lambda)}^2) 
+  \frac{k_0}{2} \Vert \nabla c_h^n \Vert_{L^2(\Omega)}^2
\nonumber\\
+ C_a \vert \hat c_h^n \vert_{\DG}^2 
+\Vert \gamma^{1/2} \vert \partial\mathcal{D}\vert^{1/2} \left(\overline{c_h^{n}}-\hat{c}_h^{n}\right)\Vert_{L^2(\Lambda)}^2
\leq  
\frac12 \Vert c_h^n\Vert_{L^2(\Omega)}^2
+ \frac12 \Vert \hat c_h \Vert_{L^2(\Lambda)}^2
\nonumber\\
+ \frac12 \Vert f(\cdot,t^n)\Vert_{L^2(\Omega)}^2
+ \frac12 \Vert \hat f(\cdot,t^n) \Vert_{L^2(\Lambda)}^2
+ |\mathcal{D}(0)|\hat U \, (c^{\mathrm{in}}(t^n))^2.
\label{eq:apriori2}
\end{align}
To conclude, we multiply by $2\tau$, sum from $n=1$ to $n=m$,  use the stability of the $L^2$ projection of $\hat c^0$ in the $L^2$ norm, the approximation bound of the Scott-Zhang interpolant of $c^0$ and use Gronwall's lemma. 
\end{proof}

\section{Error analysis}
\label{sec:error}

To handle the coupling term in the analysis below, we will make use of the following result (see Lemma 3.4 in \cite{Zunino19}).
\begin{lemma}\label{lem:trace}
The following inequality holds 
\[
\forall v \in H^1(\Omega), \quad \| \vert \partial \mathcal{D}\vert^{1/2} \overline{v}\|_{L^2(\Lambda)} \leq\|v\|_{L^2(\Gamma)} \leq C_1\|v\|_{H^1(\Omega)}, 
\]
where $C_1$ is the positive constant of the trace inequality from $L^2(\Gamma)$ to $H^1(\Omega)$.
\end{lemma}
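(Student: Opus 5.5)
The proof splits into two inequalities, which I will prove in order: first the pointwise-in-$s$ bound from lateral averages to the surface $\Gamma$, then the standard trace theorem on the Lipschitz subdomain bounded by $\Gamma$.

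\textbf{First inequality.} For a.e. $s\in[0,L]$, the definition of the lateral average and the Cauchy--Schwarz inequality on $\partial\mathcal{D}(s)$ give
\[
\vert\partial\mathcal{D}(s)\vert\,\overline{v}(s)^2
= \frac{1}{\vert\partial\mathcal{D}(s)\vert}\Big(\int_{\partial\mathcal{D}(s)} v\Big)^2
\leq \int_{\partial\mathcal{D}(s)} v^2 .
\]
Integrating over $s\in[0,L]$ gives $\int_0^L \int_{\partial\mathcal{D}(s)} v^2\,ds$. I will identify this with $\Vert v\Vert_{L^2(\Gamma)}^2$ by writing the surface measure on the generalized cylinder $\Gamma$ as $d\sigma = J(s,\theta)\,d\ell\,ds$, where $d\ell$ is arc length on $\partial\mathcal{D}(s)$. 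The constant-free form of the inequality implicitly uses $J\equiv 1$, which holds, for example, when the cross-sections are orthogonal to a straight centerline. This is the convention of \cite{Zunino19}, where $\Lambda$ is parametrized by arc length and the inclusion is assumed thin.

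In the general $\mathcal{C}^2$ case, $J$ is bounded below by a positive constant that depends on the curvature of $\Lambda$ and on $d_1$ from assumption (i). The bound then holds with a harmless constant, which is absorbed into $C_1$. This measure-identification step is the only delicate point, and I would handle it by citing the geometric setting of \cite{Zunino19}, Lemma 3.4.

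\textbf{Second inequality.} Here I apply the standard trace theorem. $\Gamma$ is the boundary of the Lipschitz domain $\Sigma_\Lambda\subset\Omega$, so for $v\in H^1(\Omega)$ the restriction $v|_{\Sigma_\Lambda}$ lies in $H^1(\Sigma_\Lambda)$. The trace inequality then gives
\[
\Vert v\Vert_{L^2(\Gamma)} \leq C\,\Vert v\Vert_{H^1(\Sigma_\Lambda)} \leq C\,\Vert v\Vert_{H^1(\Omega)}.
\]
Alternatively, one can take the trace from the exterior part $\Omega\setminus\overline{\Sigma_\Lambda}$. Setting $C_1$ equal to this trace constant completes the chain.

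The main obstacle is the first step: justifying $\int_0^L\int_{\partial\mathcal{D}(s)}v^2\,ds \leq \Vert v\Vert_{L^2(\Gamma)}^2$, which depends on the surface-measure Jacobian of the generalized cylinder. The Cauchy--Schwarz step and the trace inequality are routine.
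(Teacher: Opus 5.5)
Your proof is correct and follows the same route as the result the paper cites for this lemma (Lemma 3.4 of \cite{Zunino19}). That route is Cauchy--Schwarz on each $\partial\mathcal{D}(s)$, then the identification $d\sigma = d\ell\,ds$ of the surface measure on $\Gamma$ built into that geometric setting, then the standard trace theorem on $\Sigma_\Lambda$. The only caveat is that when the Jacobian has $J<1$ somewhere (e.g.\ a bent circular tube, where $J = 1-R\kappa\cos\theta$), absorbing $(\inf J)^{-1/2}$ into $C_1$ yields only the composite bound $\Vert \vert\partial\mathcal{D}\vert^{1/2}\overline{v}\Vert_{L^2(\Lambda)}\leq C\Vert v\Vert_{H^1(\Omega)}$, not the constant-free middle inequality, which genuinely fails there (take traces $1+\epsilon\cos\theta$ with $0<\epsilon<2R\kappa$); this is harmless, since only the composite bound is used for $T_7$.
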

Now, under sufficient regularity for the weak solution, we state and prove an error bound that is optimal in time and space with respect to the gradient norm. 
\begin{theorem}
Let $(c,\hat c)$ be the weak solution and assume that 
$c \in L^2(0,T;H^r(\Omega))$, for $1<r<3/2$,  
$\hat c \in L^2(0,T;H^2(\Omega))$, 
$\frac{\partial c}{\partial t} \in L^2(0,T;H^r(\Omega))$ and
$\frac{\partial^2 c}{\partial t^2} \in L^2([0,T]\times\Omega).$
Assume $\tau$ is small enough, namely $\tau \leq \frac12$. 
There exists a constant $M>0$ independent of $h_\Omega, h_\Lambda$ and $\tau$ such that for all $m\geq 1$
    \begin{align}
    \Vert   c(\cdot,t^m)-c_h^m \Vert_{L^2(\Omega)}^2
    +  \Vert \hat c(\cdot,t^m)-c_h^m\Vert_{L^2(\Lambda)}^2
\nonumber\\
+ \tau \sum_{n=1}^m \Vert \nabla c(\cdot,t^n) - \nabla c_h^n \Vert_{L^2(\Omega)}^2
+ \tau \sum_{n=1}^m \vert \hat c(\cdot,t^n) - \hat c_h^n \vert_{\DG}^2
     \leq  
     M (\tau^2 + h_\Omega^{2r-2}+ h_\Lambda^2).
     \end{align}
\end{theorem}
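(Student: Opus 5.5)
We follow the standard splitting of the error into an interpolation part and a discrete part. Write
\[
c(\cdot,t^n)-c_h^n = (c(\cdot,t^n)-\Pi c(\cdot,t^n)) + (\Pi c(\cdot,t^n)-c_h^n) =: \theta^n + \xi^n,
\qquad
\hat c(\cdot,t^n)-\hat c_h^n = (\hat c(\cdot,t^n)-\pi\hat c(\cdot,t^n)) + (\pi \hat c(\cdot,t^n) - \hat c_h^n) =: \hat\theta^n+\hat\xi^n .
\]
Here $\Pi$ is the Scott--Zhang interpolant and $\pi$ the $L^2$ projection onto $V_h^\Lambda$. We use $\pi$, rather than a local interpolant matching nodal values, because the $L^2$ orthogonality kills the $\frac1\tau\int|\mathcal{D}|(\hat\theta^n-\hat\theta^{n-1})\hat v_h$ term only when $|\mathcal{D}|$ is constant. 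Otherwise that term is simply bounded by $\Vert\partial_t\hat\theta\Vert$. The approximation results we need are:
\begin{itemize}
\item $\Vert\theta\Vert_{L^2}+h_\Omega\Vert\nabla\theta\Vert_{L^2}\le M h_\Omega^{r}\Vert c\Vert_{H^r}$;
\item $\Vert\hat\theta\Vert_{L^2(\Lambda)} + h_\Lambda|\hat\theta|_{\DG}\le M h_\Lambda^2\Vert\hat c\Vert_{H^2(\Lambda)}$;
\item the trace bound at nodes, $|\hat\theta(s_i^\pm)|\le M h_\Lambda^{3/2}$.
\end{itemize}
The same bounds hold for time derivatives.

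Next we derive the error equations. We evaluate the weak formulation at $t=t^n$ and subtract the scheme \eqref{eq:FEMscheme}--\eqref{eq:DGscheme}. Consistency of $a_\Lambda,b_\Lambda$ for $\hat c(t^n)\in H^2(\Lambda)$ holds because $\hat c$ and its flux are continuous, so the jump terms vanish and the element-wise integration by parts reproduces the weak form. The 3D equation is conforming and needs no DG consistency. This gives, for all $(v_h,\hat v_h)$,
\[
\tfrac1\tau(\xi^n-\xi^{n-1},v_h) + (\kappa\nabla\xi^n,\nabla v_h) - (\bfU\xi^n,\nabla v_h) + \int_\Lambda\gamma|\partial\mathcal D|(\overline{\xi^n}-\hat\xi^n)\overline{v_h} = R^n(v_h).
\]
An analogous identity holds for $\hat\xi^n$. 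The residual $R^n$ collects three kinds of terms:
\begin{itemize}
\item the time truncation $\partial_t c(t^n)-\frac{c(t^n)-c(t^{n-1})}{\tau}$, bounded by $\tau^{1/2}\Vert\partial_{tt}c\Vert_{L^2(t^{n-1},t^n;L^2)}$;
\item the term $\frac1\tau(\theta^n-\theta^{n-1})$, bounded by $\tau^{-1/2}\Vert\partial_t\theta\Vert_{L^2(t^{n-1},t^n;L^2)}$, which is $O(h_\Omega^{r})$ after summation;
\item the interpolation terms $(\kappa\nabla\theta^n,\nabla v_h)$, $(\bfU\theta^n,\nabla v_h)$, $a_\Lambda(\hat\theta^n,\hat v_h)$, $b_\Lambda(\hat\theta^n,\hat v_h)$, and the coupling $\int_\Lambda\gamma|\partial\mathcal D|(\overline{\theta^n}-\hat\theta^n)(\overline{v_h}-\hat v_h)$.
\end{itemize}
The 1D time-truncation term requires $\partial_{tt}\hat c$, which the statement does not list. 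We would either assume it implicitly or handle it through the $\hat c\in H^1(0,T;\cdot)$ regularity built into the weak solution, and state this explicitly.

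We then test with $(v_h,\hat v_h)=(\xi^n,\hat\xi^n)$, add the two equations, and use the same steps as in the stability lemma. The advection term is handled with \eqref{eq:twocases}, $a_\Lambda$ with the coercivity \eqref{eq:coercivity}, and $b_\Lambda$ with the positivity \eqref{eq:bpositivity}. The left side then controls
\[
\frac1{2\tau}\bigl(\Vert\xi^n\Vert^2-\Vert\xi^{n-1}\Vert^2\bigr)+\frac{d_0}{2\tau}(\cdots)+\frac{k_0}2\Vert\nabla\xi^n\Vert^2 + C_a|\hat\xi^n|_{\DG}^2+\Vert\gamma^{1/2}|\partial\mathcal D|^{1/2}(\overline{\xi^n}-\hat\xi^n)\Vert^2 ,
\]
together with the upwind jump terms.

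Each residual is bounded by Young's inequality so that it is absorbed into the left side. For $a_\Lambda(\hat\theta,\hat\xi)$, the flux average $\{\partial_s\hat\theta\}$ at the nodes is bounded via the trace inequality, giving $h_\Lambda^{-1/2}(\Vert\partial_s\hat\theta\Vert+h_\Lambda\Vert\partial_{ss}\hat\theta\Vert)\lesssim h_\Lambda^{1/2}$, which is paired against $h_\Lambda^{1/2}|\hat\xi|_{\DG}$-type quantities. For $b_\Lambda(\hat\theta,\hat\xi)$, the volume term $\int|\mathcal D|\hat U\hat\theta\,\partial_s\hat\xi$ is bounded by $\Vert\hat\theta\Vert\,|\hat\xi|_{\DG}$, and the upwind node terms by $|\hat\theta(s_i^-)|\,|[\hat\xi]|$, which is absorbed by the positivity term \eqref{eq:bpositivity}. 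The coupling terms use Lemma~\ref{lem:trace}: $\Vert|\partial\mathcal D|^{1/2}\overline{\theta^n}\Vert_{L^2(\Lambda)}\le C_1\Vert\theta^n\Vert_{H^1(\Omega)}\le Mh_\Omega^{r-1}$, and the result is absorbed into the $\gamma$-weighted term or into $\Vert\xi^n\Vert_{H^1}$ via Poincaré. Finally we multiply by $2\tau$, sum over $n$, and use $\xi^0=0$ (since $c_h^0=\Pi c^0$) and $\hat\xi^0=0$ (since $\hat c_h^0=\pi\hat c^0$). The discrete Gronwall lemma with $\tau\le1/2$ gives a bound on the $\xi$-quantities of order $\tau^2+h_\Omega^{2r-2}+h_\Lambda^2$, and the triangle inequality with the approximation bounds on $\theta,\hat\theta$ concludes the proof.

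The main obstacle is the low regularity $r<3/2$ of $c$. The Dirac line source makes $\kappa\nabla c\cdot n$ jump across $\Lambda$, so we must not integrate by parts in 3D. This is avoided by working only with the weak formulation, which the conforming method satisfies exactly. The limiting term is $(\kappa\nabla\theta^n,\nabla\xi^n)$ together with the coupling trace term, and both produce exactly $h_\Omega^{r-1}$. A secondary point is that the coupling term can only be absorbed if we keep the full $\Vert\nabla\xi^n\Vert^2$ on the left rather than relying on $\gamma$, which may vanish.
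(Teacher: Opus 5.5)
Your proposal is correct and follows essentially the same route as the paper: the same splitting via the Scott--Zhang interpolant and the $L^2$ projection, testing the error equations with the discrete errors, coercivity of $a_\Lambda$ plus positivity of $b_\Lambda$ to absorb the upwind node terms, Lemma~\ref{lem:trace} for the coupling, and discrete Gronwall. You are also right that the paper's proof silently uses $\partial_{tt}\hat c\in L^2$ and $\partial_t\hat c\in L^2(0,T;H^2(\Lambda))$, and that the initial discrete errors vanish exactly; one correction to your closing remark is that the coupling residual carries the weight $\gamma$ on both factors, so weighted Cauchy--Schwarz lets you absorb it into the $\gamma$-weighted left-hand term even where $\gamma=0$, which is exactly what the paper does.
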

\begin{proof}
We will first obtain error bounds for the errors between the numerical solutions and optimal approximations of the weak solutions. We introduce the notation:
\begin{align*}
\forall n\geq 0, \quad e^n = c_h^n - \Pi c(t^n), \quad
\hat e^n  = \hat c_h^n - \pi \hat c(t^n),\\
\forall t\geq 0, \quad \xi(t) = c(t) - \Pi c(t), \quad
\hat \xi(t)  = \hat c(t) - \pi \hat c(t),
\end{align*}
where for all $t$, $\Pi c(t) \in H^1_0(\Omega)$ is the Scott-Zhang interpolant of $c(t)$ and  $\pi \hat c(t)$ is the L$^2$ projection of $\hat c(t)$ on $V_h^\Lambda$. These approximations satisfy the following optimal bounds \cite{BernardiGiraultETALBook}:
\begin{align}
    & \Vert \xi(t) \Vert_{L^2(\Omega)} \leq M h_\Omega^{r} \Vert c(t)\Vert_{H^r(\Omega)},\,\,
    \Vert \partial_t\xi(t) \Vert_{L^2(\Omega)} \leq M h_\Omega^{r} \Vert \partial_t c(t)\Vert_{H^r(\Omega)}, \label{eq:approxL2c}
    \\
    &\Vert \nabla \xi(t) \Vert_{L^2(\Omega)} \leq M h_\Omega^{r-1} \Vert c(t)\Vert_{H^r(\Omega)},\label{eq:approxgradc}
    \\
    \forall 1\leq i\leq N, & \, \Vert \hat \xi(t)\Vert_{L^2([s_{i-1},s_i])} \leq M h_\Lambda^2 \Vert \hat c(t) \Vert_{H^2(s_{i-1},s_i])},\label{eq:approxL2hatc}\\
    \forall 1\leq i\leq N, & \, \Vert \partial_t \hat \xi(t)\Vert_{L^2([s_{i-1},s_i])} \leq M h_\Lambda^2 \Vert \partial_t \hat c(t) \Vert_{H^2(s_{i-1},s_i])},\label{eq:approxL2hatderc}
    \\
    \forall 1\leq i\leq N, & \, \Vert \frac{\partial\hat \xi(t)}{\partial s} \Vert_{L^2([s_{i-1},s_i])} 
    + h_\Lambda \Vert \frac{\partial^2\hat \xi(t)}{\partial s^2} \Vert_{L^2([s_{i-1},s_i])} 
    \leq M h_\Lambda \Vert \hat c(t) \Vert_{H^2(s_{i-1},s_i])}.\label{eq:approxgradhatc}
\end{align}
The bounds hold for the time derivatives because $\partial_t \Pi c = \Pi \partial_t c$ and $\partial_t \pi c = \pi \partial_t c$.

With the regularity assumptions on the weak solution, it is easy to see that it satisfies for all $n\geq 1$:
\begin{align*}
    \int_\Omega \frac{\partial c}{\partial t}(t^n)  v_h
      +   \int_\Omega  \kappa \nabla c(t^{n}) \cdot \nabla v_h  
      - \int_\Omega \bfU c(t^{n}) \cdot \nabla v_h
      +\int_\Lambda\gamma \vert \partial\mathcal{D}\vert \left(\overline{c(t^{n})}-\hat{c}(t^{n})\right)\overline{v_h}\nonumber\\
        = \int_\Omega f(\cdot,t^{n}) v_h, \quad \forall v_h \in V_h^\Omega,
\end{align*}
and
\begin{align*}
\int_\Lambda |\mathcal{D}| \frac{\partial\hat c(t^n)}{\partial  t} \hat v_h
    + a_\Lambda(\hat c(t^{n}), \hat v_h)
    + b_\Lambda(\hat c(t^{n}), \hat v_h)\nonumber\\
    +\int_\Lambda \gamma|\partial\mathcal{D}|\left(\hat c(t^{n})-\overline{c(t^{n})}\right)   \hat v_h
   =    
   |\mathcal{D}(0)|\hat U \, c^{\text{in}}(t^n) \hat v_h(0)
   + \int_\Lambda \hat f(\cdot, t^{n}) \hat v_h, \quad\forall \hat v_h \in V_h^\Lambda.
\end{align*}
Therefore we have the following error equations
\begin{align}
    \frac{1}{\tau}\int_\Omega (e^{n}-e^{n-1}) v_h
      +   \int_\Omega  \kappa \nabla e^{n} \cdot \nabla v_h
      - \int_\Omega \bfU e^{n} \cdot \nabla v_h
      +\int_\Lambda\gamma \vert \partial\mathcal{D}\vert \left(\overline{e^{n}}-\hat{e}^{n}\right)\overline{v_h}\nonumber\\
     = \int_\Omega \left(\frac{\partial c}{\partial t}(t^n) -\frac{1}{\tau} (\Pi c(t^{n})- \Pi c(t^{n-1})) \right)v_h
      +   \int_\Omega  \kappa \nabla \xi(t^{n}) \cdot \nabla v_h
      - \int_\Omega \bfU \xi(t^{n}) \cdot \nabla v_h\nonumber\\
      +\int_\Lambda\gamma \vert \partial\mathcal{D}\vert \left(\overline{\xi(t^{n})}-\hat{\xi}(t^{n})\right)\overline{v_h},\quad\forall v_h \in V_h^\Omega,
\label{eq:error1}
\end{align}
and
\begin{align}
    \frac{1}{\tau} \int_\Lambda |\mathcal{D}|(\hat e^{n}- \hat e^{n-1}) \hat v_h
    + a_\Lambda(\hat e^{n}, \hat v_h)
    + b_\Lambda(\hat e^{n}, \hat v_h)
    +\int_\Lambda \gamma|\partial\mathcal{D}|\left(\hat e^{n}-\overline{e^{n}}\right)   \hat v_h\nonumber\\
= \int_\Lambda |\mathcal{D}| \left( \frac{\partial \hat c}{\partial t} - \frac{1}{\tau} (\pi \hat c(t^n) - \pi\hat c(t^{n-1}))\right) \hat v_h
    + a_\Lambda(\hat \xi(t^{n}), \hat v_h)
    + b_\Lambda(\hat \xi(t^{n}), \hat v_h)\nonumber\\
    +\int_\Lambda \gamma|\partial\mathcal{D}|\left(\hat \xi(t^{n})-\overline{\xi(t^{n})}\right)   \hat v_h, \quad \forall \hat v_h \in V_h^\Lambda.
\label{eq:error2}
\end{align}
Choosing $v_h = e^n$ in \eqref{eq:error1} and $v_h = \hat e^n$ in \eqref{eq:error2}, summing the two resulting equations, using coercivity of $a_\Lambda$, property \eqref{eq:twocases} and the positivity of $b_\Lambda$, we have
\begin{align*}
    \frac{1}{2\tau} \left(
\Vert   e^n \Vert_{L^2(\Omega)}^2
-\Vert  e^{n-1} \Vert_{L^2(\Omega)}^2 \right)
    + \frac{1}{2\tau} \left(
\Vert \vert \mathcal{D}\vert^{1/2} \hat e^n \Vert_{L^2(\Lambda)}^2
-\Vert \vert \mathcal{D}\vert^{1/2} \hat e^{n-1} \Vert_{L^2(\Lambda)}^2\right)
\nonumber\\
+ \Vert \gamma^{1/2} \vert \partial\mathcal{D}\vert^{1/2} (\overline{e^{n}}-\hat{e}^{n}) \Vert_{L^2(\Lambda)}^2
+\frac{k_0}{2}  \Vert \nabla e^n \Vert_{L^2(\Omega)}^2
+ C_a \vert \hat e^n \vert_{\DG}^2
\nonumber\\
+\frac12 \sum_{i=1}^{N-1}  \ \vert \mathcal{D}(s_i)\vert \hat U ([\hat e^n]_{s_i})^2
+\frac12 \vert \mathcal{D}(0)\vert \hat U (\hat e^n(0))^2
 +\frac12 \vert \mathcal{D}(L)\vert \hat U (\hat e^n(L))^2
 \nonumber\\
     \leq  \int_\Omega \left(\frac{d c}{dt}(t^n) -\frac{1}{\tau} (\Pi c(t^{n})- \Pi c(t^{n-1})) \right) e^n
+  \int_\Lambda |\mathcal{D}| \left( \frac{d \hat c}{d t} - \frac{1}{\tau} (\pi\hat c(t^n) - \pi\hat c(t^{n-1}))\right) \hat e^n
\nonumber\\
 +   \int_\Omega  \kappa \nabla \xi(t^{n}) \cdot \nabla e^n
      - \int_\Omega \bfU \xi(t^{n}) \cdot \nabla e^n 
    + a_\Lambda(\hat \xi(t^{n}), \hat e^n)
    + b_\Lambda(\hat \xi(t^{n}), \hat e^n)\nonumber\\
      +\int_\Lambda\gamma \vert \partial\mathcal{D}\vert \left(\overline{\xi(t^{n})}-\hat{\xi}(t^{n})\right)
 (\overline{e^n} - \hat e^n)
\\
= T_1 + \dots + T_7.
\end{align*}
We now bound each term $T_i$.  For the first two terms $T_1$ and $T_2$, we will use the well-known equality for any function $w$ of one variable $t$:
\[
\frac{1}{\tau}(w(t^n)-w(t^{n-1})
= w'(t^n)-\frac{1}{\tau}\int_{t^{n-1}}^{t^n} (s-t^{n-1}) w''(s) ds.
\]
Using Cauchy-Schwarz's inequality, Young's inequality with a positive $\epsilon_1$ parameter, \eqref{eq:poincare} and the approximation bound \eqref{eq:approxL2c}, we have for $T_1$:
\begin{align*}
T_1 = & \int_\Omega \left(\frac{d c}{dt}(t^n) -\frac{1}{\tau} (\Pi c(t^{n})- \Pi c(t^{n-1})) \right) e^n\\
= & \int_\Omega \left(\frac{d c}{dt}(t^n) -\frac{1}{\tau} (c(t^{n})- c(t^{n-1})) \right) e^n
+ \frac{1}{\tau} \int_\Omega (\xi(t^n)-\xi(t^{n-1}) e^n
\\
\leq &  M \sqrt{\tau} \Vert e^n\Vert_{L^2(\Omega)}
\left\Vert \frac{\partial^2 c}{\partial t^2}\right\Vert_{L^2([t^{n-1},t^n]\times \Omega)}
+ \frac{1}{\tau} \int_\Omega e^n \left(\int_{t^{n-1}}^{t^n}
\frac{\partial \xi}{\partial t}\right)
\\
\leq & \epsilon_1 \Vert \nabla e^n\Vert_{L^2(\Omega)}^2
+ \frac{M}{2 \epsilon_1} \tau \left\Vert \frac{\partial^2 c}{\partial t^2}\right\Vert_{L^2([t^{n-1},t^n]\times \Omega)}^2
+ M \frac{h_\Omega^{2r}}{\epsilon_1 \tau} \left\Vert \frac{\partial c}{\partial t}\right\Vert_{L^2(t^{n-1},t^n;H^r(\Omega))}^2.
\end{align*}
The second term is bounded in a similar fashion as the first term, except we do not use Poincar\'e's inequality. The approximation bound \eqref{eq:approxL2hatderc} and Young's inequality with parameter $\epsilon_2>0$ are used.
\begin{align*}
T_2 
\leq & \epsilon_2 \Vert \hat e^n \Vert_{L^2(\Lambda)}^2
+ \frac{M}{2 \epsilon_2} \tau \left\Vert \frac{\partial^2 \hat c}{\partial t^2}\right\Vert_{L^2([t^{n-1},t^n]\times \Lambda)}^2
+ M \frac{h_\Lambda^{4}}{2\epsilon_2 \tau} \left\Vert \frac{\partial \hat c}{\partial t}\right\Vert_{L^2(t^{n-1},t^n;H^2(\Lambda))}^2.
\end{align*}
The third term is bounded using Cauchy-Schwarz's and
Young's inequalities and the approximation bound \eqref{eq:approxgradc}:
\begin{align*}
T_3 
\leq & \frac{\epsilon_3}{2} \Vert \nabla e^n \Vert_{L^2(\Omega)}^2
+ \frac{1}{2 \epsilon_3} k_1^2 \Vert \nabla \xi(t^n)\Vert_{L^2(\Omega)}^2\\
\leq & \frac{\epsilon_3}{2} \Vert \nabla e^n \Vert_{L^2(\Omega)}^2
+ \frac{M k_1^2}{2 \epsilon_3} h_\Omega^{2r-2} \Vert c(t^n)\Vert_{H^r(\Omega)}^2. 
\end{align*}
Similarly, with Cauchy-Schwarz's, Young's inequalities and the approximation property \eqref{eq:approxL2c}, we have for the fourth term:
\begin{align*}
T_4 
\leq & \frac{\epsilon_4}{2} \Vert \nabla e^n \Vert_{L^2(\Omega)}^2
+ \frac{1}{2 \epsilon_4} \Vert \bfU\Vert_{L^\infty(\Omega)}^2 \Vert \xi(t^n) \Vert_{L^2(\Omega)}^2\\
\leq & \frac{\epsilon_4}{2} \Vert \nabla e^n \Vert_{L^2(\Omega)}^2
+ \frac{M}{2 \epsilon_4} \Vert \bfU\Vert_{L^\infty(\Omega)}^2
h_\Omega^{2r}  \Vert c(t^n)\Vert_{H^r(\Omega)}^2. 
\end{align*}
Using standard DG techniques to bound the terms in the DG bilinear form (\cite{riviere2008discontinuous}, see Appendix~\ref{sec:appe}):
\begin{align*}
T_5 
\leq  \frac{\epsilon_5}{2} 
\vert \hat e^n \vert_{\DG}^2
+ \frac{1}{2 \epsilon_5} M h_\Lambda^2 \Vert \hat c(t^n) \Vert_{H^2(\Lambda)}^2.
\end{align*}
For the sixth term, we have with \eqref{eq:approxL2hatc}, \eqref{eq:approxgradhatc},
a trace inequality, Cauchy-Schwarz's and Young's inequalities:
\begin{align*}
T_6 
= & -\sum_{i=1}^{N} \int_{s_{i-1}}^{s_i}|\mathcal{D}|\hat U \, \hat \xi (t^n)\frac{\partial\hat e^n}{\partial s} 
+\sum_{i=1}^{  N-1 }
    \vert \mathcal{D}(s_i)\vert  \hat U \, \hat \xi(s_i^-,t^n) \, [\hat e^n]_{s_i}\\
    & +|\mathcal{D}(L)| \hat U \, \hat \xi(L, t^n)\, (\hat e^n(L))\\
\leq & 
d_1 \sum_{i=1}^{N} \hat U \Vert \frac{\partial\hat e^n}{\partial s} \Vert_{L^2(s_{i-1},s_i)} \Vert \hat \xi(t^n)\Vert_{L^2(s_{i-1},s_i)}
+ \frac14 \sum_{i=1}^{N-1}  \vert \mathcal{D}(s_i)\vert \hat U [\hat e^n]_{s_i}^2
\nonumber\\
& + \sum_{i=1}^{N-1}  \hat U \vert \mathcal{D}(s_i)\vert \hat \xi(s_i^-,t^n)^2
+\frac14 |\mathcal{D}(L)| \hat U (\hat e^n(L))^2
+ |\mathcal{D}(L)| \hat U \, \hat \xi(L, t^n)^2\\
\leq & \frac{\epsilon_6}{2} \vert \hat e^n \vert_{\DG}^2 
+ \frac{d_1^2}{2\epsilon_6} \hat U^2
\sum_{i=1}^{N} \Vert \hat \xi(t^n)\Vert_{L^2(s_{i-1},s_i)}^2
+ \frac14 \sum_{i=1}^{N-1} \vert \mathcal{D}(s_i)\vert   \hat U [\hat e^n]_{s_i}^2
\nonumber\\
& + \sum_{i=1}^{N-1}  \hat U \vert \mathcal{D}(s_i)\vert \hat \xi(s_i^-,t^n)^2
+\frac14 |\mathcal{D}(L)| \hat U (\hat e^n(L))^2
+ |\mathcal{D}(L)| \hat U \, \hat \xi(L, t^n)^2\\
\leq & \frac{\epsilon_6}{2} \vert \hat e^n \vert_{\DG}^2 
+ M \frac{d_1^2}{2\epsilon_6} \hat U^2
h_{\Lambda}^4 \Vert \hat c(t^n)\Vert_{H^2(\Lambda)}^2
+ \frac14 \sum_{i=1}^{N-1}  \vert \mathcal{D}(s_i)\vert \hat U \, [\hat e^n]_{s_i}^2
\nonumber\\
& +\frac14 |\mathcal{D}(L)| \hat U (\hat e^n(L))^2
+ M d_1  \hat U h_\Lambda^{3} \Vert \hat c(t^n)\Vert_{H^2(\Lambda)}^2.
\end{align*}
For the last term, we first write
\begin{align*}
T_7 
  \leq
 \frac{1}{2}
 \Vert \gamma^{1/2} \vert \partial\mathcal{D}\vert^{1/2} (\overline{e^{n}}-\hat{e}^{n}) \Vert_{L^2(\Lambda)}^2
  + \frac{1}{2}
  \Vert \gamma^{1/2} \vert \partial\mathcal{D}\vert^{1/2} (\overline{\xi(t^{n})}-\hat{\xi}(t^{n})) \Vert_{L^2(\Lambda)}^2
\end{align*}
Using triangle inequality, Lemma~\ref{lem:trace}, and approximation results \eqref{eq:approxL2hatc}, \eqref{eq:approxL2c} and \eqref{eq:approxgradc}, we obtain:
\begin{align*}
\Vert \gamma^{1/2} \vert \partial\mathcal{D}\vert^{1/2} (\overline{\xi(t^{n})}-\hat{\xi}(t^{n})) \Vert_{L^2(\Lambda)}
\leq & \gamma_1 \Vert \vert \partial\mathcal{D}\vert^{1/2} \overline{\xi(t^{n})}
\Vert_{L^2(\Lambda)}
+ \gamma_1\Vert |\partial D|^{1/2} \hat \xi(t^n)\Vert_{L^2(\Lambda)}\\
\leq & \gamma_1 C_1 \Vert \xi(t^n)\Vert_{H^1(\Omega)}
+ \gamma_1 d_1 M h_\Lambda^2 \Vert \hat c(t^n)\Vert_{H^2(\Lambda)}\\
\leq & \gamma_1 C_1 M h_\Omega^{r-1} \Vert c(t^n)\Vert_{H^{r}(\Omega)}
+ \gamma_1 d_1 M h_\Lambda^2 \Vert \hat c(t^n)\Vert_{H^2(\Lambda)}.
\end{align*}
Therefore a bound for $T_7$ is
\[
T_7 \leq \frac{1}{2}
 \Vert \gamma^{1/2} \vert \partial\mathcal{D}\vert^{1/2} (\overline{e^{n}}-\hat{e}^{n}) \Vert_{L^2(\Lambda)}^2
 + M h_\Omega^{2r-2} \Vert c(t^n)\Vert_{H^{r}(\Omega)}^2 + M h_\Lambda^4 \Vert \hat c(t^n)\Vert_{H^2(\Lambda)}^2.
\]
We choose the following values for the parameters $\epsilon_i$:
\[
\epsilon_1 =  \frac{k_0}{12}, \quad
\epsilon_2 = d_0,\quad
\epsilon_3 =
\epsilon_4 =  \frac{k_0}{6}, \quad
\epsilon_5 = 
\epsilon_6 = \frac{C_a}{2},
\]
and combine the bounds above to obtain
\begin{align}
    \frac{1}{2\tau} \left(
\Vert   e^n \Vert_{L^2(\Omega)}^2
-\Vert  e^{n-1} \Vert_{L^2(\Omega)}^2\right)
    + \frac{1}{2\tau} \left(
\Vert \vert \mathcal{D}\vert^{1/2} \hat e^n \Vert_{L^2(\Lambda)}^2
-\Vert \vert \mathcal{D}\vert^{1/2} \hat e^{n-1} \Vert_{L^2(\Lambda)}^2\right)
 \nonumber\\
+ \frac12 \Vert \gamma^{1/2} \vert \partial\mathcal{D}\vert^{1/2} (\overline{e^{n}}-\hat{e}^{n}) \Vert_{L^2(\Lambda)}^2
+\frac{k_0}{4}  \Vert \nabla e^n \Vert_{L^2(\Omega)}^2
+ \frac{C_a}{2} \vert \hat e^n \vert_{\DG}^2
\nonumber\\
+\frac14 \sum_{i=1}^{N-1}  \vert \mathcal{D}(s_i)\vert \hat U ([\hat e^n]_{s_i})^2
+\frac12 \vert \mathcal{D}(0)\vert \hat U (\hat e^n(0))^2
 +\frac14 \vert \mathcal{D}(L)\vert \hat U (\hat e^n(L))^2
 \nonumber\\
     \leq  \Vert \vert \mathcal{D}\vert^{1/2} \hat e^n \Vert_{L^2(\Lambda)}^2
     + M \tau \left( \left\Vert \frac{\partial^2 c}{\partial t^2}\right\Vert_{L^2([t^{n-1},t^n]\times \Omega)}^2
     + \left\Vert \frac{\partial^2 \hat c}{\partial t^2}\right\Vert_{L^2([t^{n-1},t^n]\times \Lambda)}^2 \right)
 \nonumber\\
 + M \frac{h_\Omega^{2r}}{\tau} \left\Vert \frac{\partial c}{\partial t}\right\Vert_{L^2([^{n-1},t^n;H^r(\Omega))}^2
 + M \frac{h_\Lambda^{4}}{\tau} \left\Vert \frac{\partial \hat c}{\partial t}\right\Vert_{L^2(t^{n-1},t^n;H^2(\Lambda))}^2
 \nonumber\\
 + M h_\Omega^{2r-2} \Vert c(t^n)\Vert_{H^r(\Omega)}^2
 + M h_\Lambda^2 \Vert \hat c(t^n)\Vert_{H^2(\Lambda)}^2.
 \label{eq:beforeGronwall}
     \end{align}
The rest of the proof follows a standard technique. After dropping some positive terms in the left-hand side, we multiply \eqref{eq:beforeGronwall} by $2\tau$,
sum from $n=1$ to $n=m$ for any $m\geq 1$, assume that $\tau \leq 1/2$ and apply Gronwall's lemma to obtain:
\begin{align*}
    \Vert   e^m \Vert_{L^2(\Omega)}^2
    + \Vert \vert \mathcal{D}\vert^{1/2} \hat e^m \Vert_{L^2(\Lambda)}^2
+\frac{k_0}{2}  \tau \sum_{n=1}^m \Vert \nabla e^n \Vert_{L^2(\Omega)}^2
+ C_a \tau \sum_{n=1}^m \vert \hat e^n \vert_{\DG}^2
\nonumber\\
     \leq  
\Vert  e^{0} \Vert_{L^2(\Omega)}^2
+\Vert \vert \mathcal{D}\vert^{1/2} \hat e^{0} \Vert_{L^2(\Lambda)}^2 
 + M \tau^2 \left( \Vert \frac{\partial^2 c}{\partial t^2}\Vert_{L^2([0,T]\times \Omega)}^2
     + \Vert \frac{\partial^2 \hat c}{\partial t^2}\Vert_{L^2([0,T]\times \Lambda)}^2 \right)
 \nonumber\\
 + M h_\Omega^{2r-2}\left(
 \Vert \frac{\partial c}{\partial t}\Vert_{L^2(0,T;H^r(\Omega))}^2
+ \Vert c\Vert_{L^2(0,T;H^r(\Omega)}^2\right)
\nonumber\\
 + M h_\Lambda^{2} \left(\Vert \frac{\partial \hat c}{\partial t}\Vert_{L^2(0,T;H^2(\Lambda))}^2
+\Vert \hat c\Vert_{L^2(0,T;H^2(\Lambda)}^2\right).
     \end{align*}
The final result is obtained by first using the approximation bounds \eqref{eq:approxL2c} and \eqref{eq:approxL2hatc} for the initial errors $e^0$ and $\hat e^0$, and second by using triangle inequalities and optimal bounds for the approximation errors.
%

\end{proof}

\section{Numerical results}
\label{sec:numerics}
In this section, we present results for a manufactured solution for a domain with a vertical line and for an unknown solution for a domain with a diagonal line. 

\subsection{Manufactured solution} 
\label{sec:manufactured}

To test convergence, we consider a manufactured solution and compute error rates. The 3D domain is defined as the unit cube centered at the origin, $\Omega = (-0.5,0.5)^3$. The 1D vessel is encompassed within $\Omega$ and is a vertical line $\Lambda = \{(0,0,z), z\in(-0.5,0.5)\}$. Our manufactured solution is  similar to the ones used in  \cite{Masri24,Zunino19}. 
Using polar coordinates, $r = (x^2+y^2)^{1/2}$, we have
\[
\hat c(z,t) = t \left(\sin(\pi z)+2\right), \,\,
    c(x,y,z,t) =  \begin{cases}
        \frac12 \left(1+R\ln\left(\frac{r}{R}\right)\right)\hat{c}(z,t), & r>R, \\
    \frac12 \hat{c}(z,t), & r\leq R.
    \end{cases} 
\]
where $R$ is the radius of cylinder with centerline $\Lambda$. We choose $R=0.05$. 
The other parameters are: $\kappa=\hat\kappa = \gamma = \hat U = 1$.  Since the 1D vessel is aligned with the z-axis, the primary direction of flow is in the vertical direction.  To ensure advection in the 3D domain occurs in the same direction as the 1D domain, we define the 3D velocity as  $\bfU=(0,0,\hat U)$. The prescribed concentration $c^\mathrm{in}$ is deduced from the exact solution $\hat c$.
%
\begin{figure}
    \centering
    \includegraphics[width=1.0\linewidth]{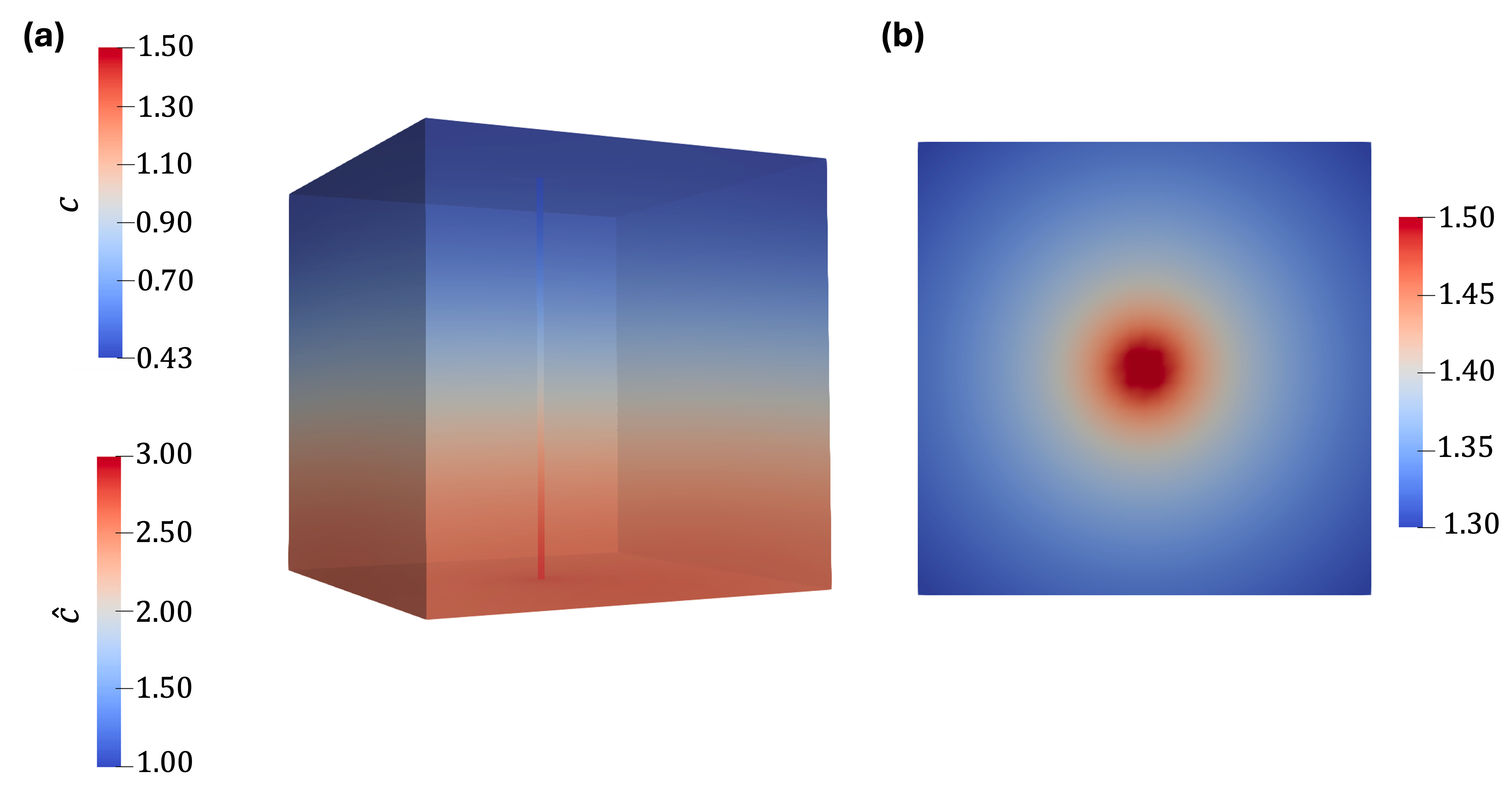}
    \caption{Numerical prediction at $T=1$ of the manufactured solution: left figure shows both concentrations in the whole domain and the right figure shows the concentrations in the plane $z=1/2$. }
    \label{fig:exactsoln}
\end{figure}
We implemented the numerical scheme using FEniCS and   the (FEniCS)$_{ii}$ module that contains the implementation of the lateral average operator \cite{logg2012automated,alnaes2015fenics,kuchta2020assembly}. For the DG parameters, we choose $\epsilon = 1$ and $\sigma=50$. Simulations are ran for a time period of $T=1.0$. Figure~\ref{fig:exactsoln} shows the numerical solutions at $T=1$ in the whole domain (left figure) and in the plane $z=1/2$ (right figure).  We perform a convergence study and compute the errors in the gradient norm and in the $L^2$ norm on successively refined uniform meshes. Table~\ref{tab:convrates_exactsoln3d} 
and Table~\ref{tab:convrates_exactsoln1D} display the errors and their corresponding rates for a mesh size $h_\Omega = h_\Lambda = 1/2^i$ for $2\leq i\leq 6$, at the final time $T=1$.  The time step size is $\tau = 0.1 h_\Omega$.  We observe that the rates in the gradient norms approach the theoretical rate $1/2$ for the 3D solution and $1$ for the 1D solution despite the coupling.  The $L^2$ rates have not converged yet. Optimally they should approach $3/2$ for the 3D solution and $2$ for the 1D solution but there is not theoretical proof of convergence in $L^2$.

\begin{table}[t]
    \centering
    \caption{Numerical errors at $T=t^m=1$ in the gradient norm and $L^2$ norm. }
    \begin{tabular}{|c|c|c|c|c|}
    \hline
        $h_\Omega = h_\Lambda$ & $\Vert \nabla c(\cdot,T)-\nabla c_h^m\Vert_{L^2(\Omega)}$ & Rate & $\Vert c(\cdot,T)- c_h^m \Vert _{L^2(\Omega)}$ & Rate \\
        \hline 
        $1/4$ & $2.5\times 10^{-1}$ & & $1.9\times 10^{-2}$ &    \\
        $1/8$ & $1.4 \times 10^{-1}$ & $0.82$ & $5.4\times 10^{-3}$ & $1.78$ \\
        $1/16$ & $9.1 \times 10^{-2}$ & $0.63$ & $1.7 \times 10^{-3}$ & $1.63$ \\
        $1/32$ & $5.6 \times 10^{-2}$ & $0.71$ & $5.2\times 10^{-4}$ & $1.75$ \\
        $1/64$ & $3.4 \times 10 ^{-2}$ & $0.73$ & $1.4\times 10^{-4}$ & $1.89$ \\
        \hline
    \end{tabular}
    \label{tab:convrates_exactsoln3d}
\end{table}

\begin{table}[t]
    \centering
    \caption{Numerical errors at $T=t^m=1$ in the broken gradient norm and $L^2$ norm.  }
    \begin{tabular}{|c|c|c|c|c|}
    \hline
        $h_\Omega=h_\Lambda$ & $\displaystyle\sum_{i=1}^N \Vert \nabla \hat c(\cdot,T) - \nabla \hat c_h^m\Vert_{L^2([s_{i-1},s_i])}$ & Rate & $\Vert \hat c(\cdot,T) - c_h^m \Vert_{L^2(\Lambda)}$ & Rate \\
        \hline 
      $1/4$ & $5.0 \times 10^{-1}$ & & $4.1 \times 10^{-2}$ & \\
        $1/8$ & $2.5 \times 10^{-1}$ & $0.99$ & $2.3 \times 10^{-2}$ & $0.82$\\
      $1/16$ & $1.3 \times 10^{-1}$ & $0.99$ & $1.3 \times 10^{-2}$  & $0.81$ \\
        $1/32$ & $6.3 \times 10^{-2}$ & $0.99$ & $6.2 \times 10^{-3}$ & $1.07$ \\
        $1/64$ & $3.1 \times 10^{-2}$ & $1.01$ & $2.3 \times 10^{-3}$ & $1.43$ \\
        \hline
    \end{tabular}
    \label{tab:convrates_exactsoln1D}
\end{table}



\subsection{Diagonal line example}
\label{sec:diagline}
We also test convergence without a manufactured solution by defining a diagonal line of length $L$, within the 3D domain and comparing the results on a family of uniform meshes. We let the true solution be defined as the solution obtained with a fine mesh, $h_\Omega = h_\Lambda = h^\ast = 1/64$.  Again, $\Omega = (-0.5,0.5)^3$ and $\Lambda$ is now the diagonal line encompassed within $\Omega$: 
\[
\Lambda = \{(x,y,z), x\in(-0.4,0.4), y\in(-0.4,0.4), z\in(-0.4,0.4)\}.
\]
The final time is $T=1$ and the time step is $\tau=0.1h_\Omega$. The solute is injected at the inlet of the 1D line for $0.1$s with $\hat{c}^{\mathrm{in}} = 5$.
The velocity fields are $\hat{U} = 1$ and $\bfU =\frac{\hat{U}}{\sqrt3}(1,1,1)$, which is the unit tangent vector along the diagonal line.   We implement three different cases:
\begin{itemize}
    \item Case 1: Constant radius $R= 0.05$ and constant $\gamma = 0.1$.
    \item Case 2: Constant $\gamma = 0.1$ and smooth varying radius, 
    \[R(s) = R_{\text{min}}+\frac{R_{\text{max}}-R_{\text{min}}}{2}\left(1+ \tanh\left(\beta\left(\frac{s}{L}-\frac{1}{2}\right)\right)\right),\]
    where $R_{\text{min}}=0.05, R_{\text{max}}=0.08$ are the minimum and maximum radii respectively, and $\beta=8$.
    \item Case 3: Smooth varying radius as defined above and piecewise constant $\gamma(s)$, 
    \[\gamma(s) = \begin{cases}
        0 & 0 \leq s < L/3, \\
        0.05 & L/3 \leq s <2L/3, \\
        0.1 & 2L/3 \leq s \leq L.
    \end{cases}\]
\end{itemize}

From Figure \ref{fig:diagline}, we see the expected behavior in the 3D tissue for all cases shown at different times: $t = 0.0125, t=0.5$ and $t = 1$.  In case 1 there is uniform exchange between the tissue and the vessel. In case 2 we have an increasing radius along the vessel length and we see the band of exchange increases likewise. In case 3, there is no exchange along the part of the vessel where $\gamma=0$ and the solute begins to perfuse into $\Omega$ at $L/3$ where we have $\gamma = 0.05$. Figure \ref{fig:1d_conc} shows the expected behavior of the concentration within the vessel itself. Overall for all three cases, most of the solute has diffused into the surrounding tissue at the final time. Case 2 shows the least amount of solute left in the vessel whereas case 1 shows the most amount left (almost double the amount). This is due to the increase of 3D-1D exchange because of the larger radius value.  There are differences between the three cases regarding the evolution of the concentration, which is shown on the figures at half-time.   At time $t=0.5$, case 3 shows the largest value of the concentration near the inlet, which is consistent with the fact that the first third of the vessel is impermeable with no exchange.
These simulations show the impact of the value of the permeability and radius of the vessel.
\begin{figure}[t]
    \centering
    \includegraphics[width=1.0\linewidth]{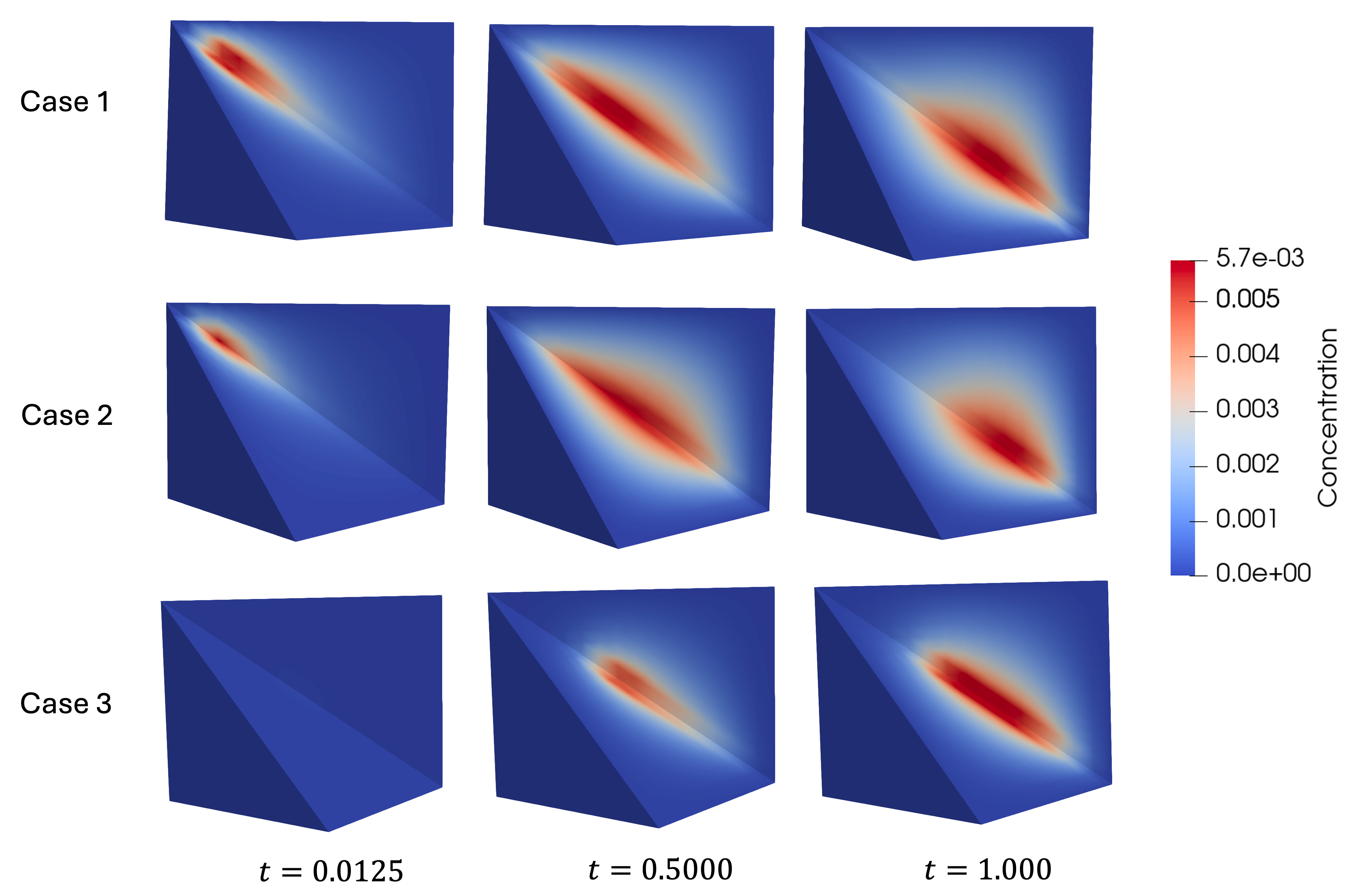}
    \caption{Simulation results of a given concentration administered at the inlet of $\Lambda$, traveling to the outlet and diffusing into $\Omega$ over a time interval $[0,1]$. Case 1: constant radius and global $\gamma$, Case 2: smooth varying radius and global $\gamma$, and Case 3: smooth varying radius and piecewise constant $\gamma$.}
    \label{fig:diagline}
\end{figure}

\begin{figure}
    \centering
    \includegraphics[width=1.0\linewidth]{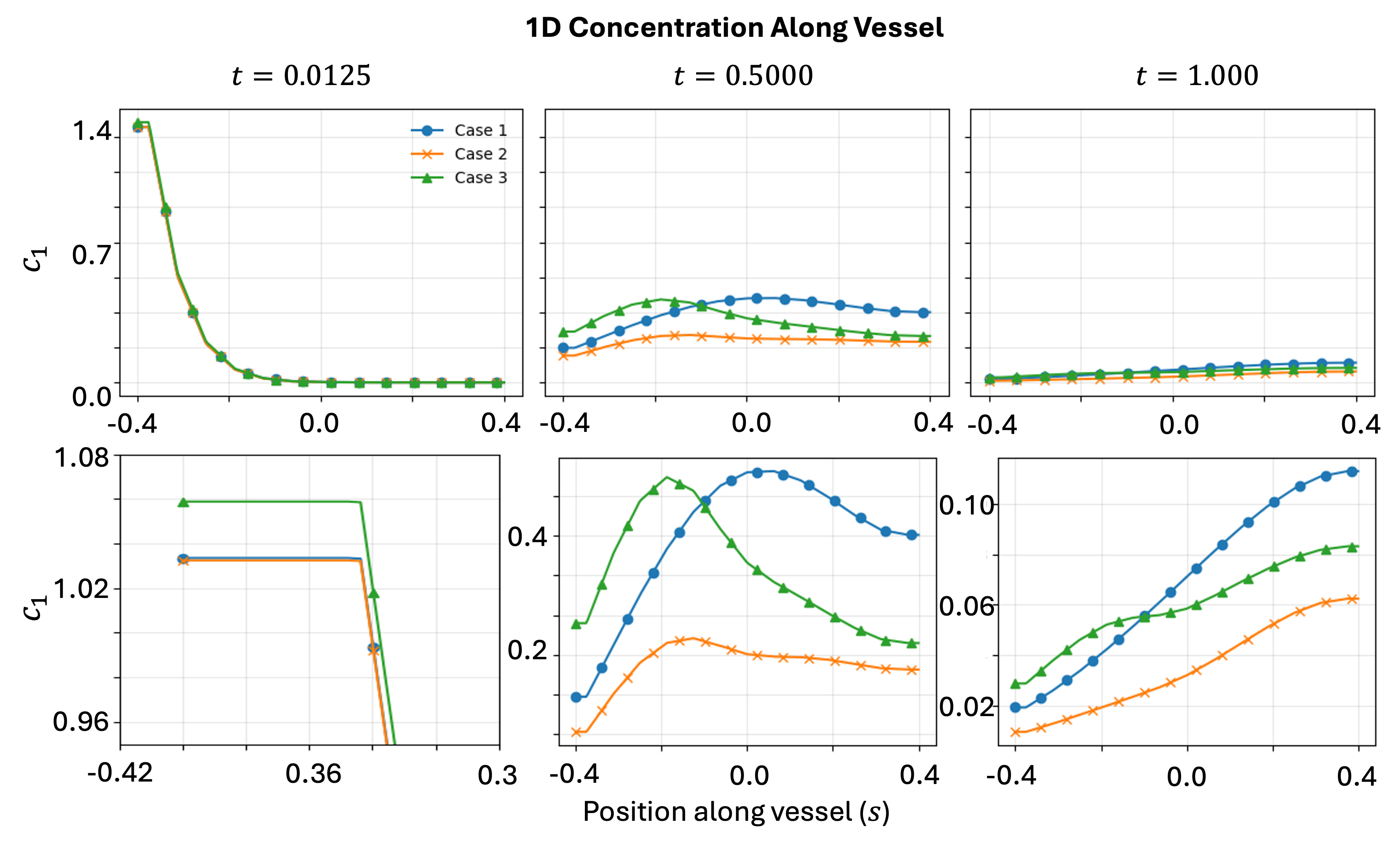}
    \caption{Concentrations $c_1$ plotted along the diagonal vessel defined in Section \ref{sec:diagline} at times $0.0125, 0.005$ and $1$. The top rows shows concentrations at each time in the same scale, while the bottom row has a varying scale to highlight the differences between cases.}
    \label{fig:1d_conc}
\end{figure}
Table \ref{tab:diagconv} shows relative $L^2$ norms of errors for both domains across the three cases. In all cases the errors decrease monotonically as mesh refinement increases, indicating convergence. Although case 3 shows an accelerated convergence to the fine grid solution, the results indicate consistent error reduction across mesh refinement indicating stability and accuracy of the model.  

\begin{table}[h!]
    \centering
    \caption{$L^2$ norms of errors between the numerical solutions obtained on a mesh of size $h_\Omega=h_\Lambda$ and a finer mesh of size $h^\ast = 1/64$. }
    \begin{tabular}{|c|c|c|c|c|c|}
    \hline
    & $h_\Omega = h_\Lambda$ & $\displaystyle\Vert c_h^m - c_{h^\ast}^m \Vert_{L^2(\Omega)}$ & Rate  & $\displaystyle \Vert \hat c_h^m - \hat c_{h^\ast}^m \Vert_{L^2(\Lambda)}$ & Rate \\
        \hline 
         \multirow{4}{3em}{Case 1} &
        $1/4$ & $5.2\times 10^{-1}$  & &  $2.0 \times 10^{-1}$ & \\
        & $1/8$ & $2.2 \times 10^{-1}$ & $1.245$  & $6.5 \times 10^{-2}$ & $1.628$\\
        & $1/16$ & $8.9\times 10^{-2}$ & $1.306$  & $1.7 \times 10^{-2}$  & $1.914$ \\
        & $1/32$ & $2.5\times 10 ^{-2}$ & $1.832$   & $4.1 \times 10^{-3}$ & $2.092$ \\
        \hline
        \multirow{4}{3em}{Case 2} & $1/4$ & $2.6\times10^{-1}$ & & $3.7\times10^{-1}$ & \\
        & $1/8$ & $8.6\times10^{-2}$ & $1.61$ & $9.0\times10^{-2}$ & $2.03$ \\
        & $1/16$ & $1.6\times10^{-2}$ & $2.43$ & $1.5\times10^{-2}$ & $2.56$ \\
        & $1/32$ & $3.5\times10^{-3}$ & $2.16$ & $3.1\times10^{-3}$ & $2.29$ \\
        \hline
        \multirow{4}{3em}{Case 3} & $1/4$ & $1.3\times10^{-1}$ & & $1.2\times10^{-1}$ & \\
        & $1/8$ & $2.5 \times 10^{-2}$ & $2.37$  & $1.3 \times 10^{-2}$ & $3.19$ \\
        & $1/16$ & $2.3\times 10^{-4}$ & $6.82$ & $9.0\times 10^{-6}$ & $10.50$ \\
        & $1/32$ & $2.4 \times 10^{-6}$ & $6.58$ & $4.7 \times 10^{-10}$ & $14.23$\\
        \hline
    \end{tabular}
    \label{tab:diagconv}
\end{table}

\section{Conclusion}
\label{sec:conclusion}

A numerical scheme is analyzed for the coupled 3D-1D transport problem. The concentration in the 3D domain is approximated by continuous piecewise polynomials and the concentration in the 1D domain is approximated by discontinuous polynomials. Error bounds that combine both 3D and 1D errors in $L^2$ in time and gradient in space are optimal with respect to the time step and the regularity of the weak solution. In addition, bounds of the errors in $L^\infty(L^2)$ are also derived.

Because of the use of finite elements for the 3D problem, the velocity field is assumed to be either divergence free or to be bounded by the diffusion coefficient in the 3D domain.  In order to remove these assumptions, the 3D transport problem should be discretized by a discontinuous Galerkin method.  The lack of consistency for the DG method in the 3D domain is a challenge.  This will be the object of future work.

\backmatter

\bmhead{Acknowledgements}

The authors acknowledge partial funding from NSF-DMS 2513092, NSF-DMS 2231482, and NIH 1R01CA301555-01. 


\section*{Declarations}


\begin{itemize}
\item Funding: The authors acknowledge partial funding from NSF-DMS 2513092, NSF-DMS 2231482, NIH 1L40HL181846-01, and NIH 1R01CA301555-01.
 \item Competing Interests: The authors have no competing interests to declare that are relevant to the content of this article.
\item Ethics approval and consent to participate: Not applicable
\item Consent for publication: Not applicable
\item Data availability: Not applicable
\item Materials availability: Not applicable
\item Code availability: Not applicable 
\item Author contribution: Not applicable
\end{itemize}


\begin{appendices}
\section{Derivation of the numerical scheme}
\label{sec:appb}
We consider first the time discretization of \eqref{eq:3D}-\eqref{eq:1D} and obtain at time $t^n$ the semi-discrete equations:
\begin{align*}
\frac{c^n-c^{n-1}}{\tau} & - \nabla \cdot (\kappa \nabla c^n) +\nabla \cdot (\bfU c^n) 
+ \gamma (\overline{c^n} - \hat{c^n}) \delta_\Lambda = f(t^n), \,\, \mbox{in}\,\Omega,\\
|\mathcal{D}(s)|\frac{\hat c^n - \hat c^{n-1}}{\tau} & - \frac{\partial}{\partial s}\left(|\mathcal{D}(s)|\hat \kappa \frac{\partial\hat c^n}{\partial s}\right) +\frac{\partial}{\partial s}\left(|\mathcal{D}(s)| \hat U \hat c^n\right) + |\partial\mathcal{D}(s)|\gamma (\hat c^n-\overline{c^n}) 
= \hat f(t^n), \,\,\mbox{in}\,\Lambda.
\end{align*}
After, we spatially discretize the first equation above by the finite element method, we multiply it by a test function $v_h$ and integrate over $\Omega$.
\begin{align*}
\int_\Omega \frac{c_h^n-c_h^{n-1}}{\tau} v_h
+  \int_\Omega  \kappa \nabla c_h^n \cdot \nabla v_h
- \int_\Omega \bfU c_h^n \cdot \nabla v_h
+\int_\Lambda\gamma \vert \partial\mathcal{D}\vert \left(\overline{c_h^n}-\hat{c}_h^n\right)\overline{v}_h
= \int_\Omega f(\cdot,t^n) v_h.
\end{align*}
For the second equation, we apply the interior penalty discontinuous Galerkin method for spatial discretizations. For notational convenience, $\partial_s \hat v$ denotes the partial derivative of any function $\hat v$ with respect to $s$. Then, we multiply the equation by a test function $\hat v_h$, integrate by parts over one interval $[s_{i-1},s_i]$ and sum over all intervals. 
\begin{align*}
\int_\Lambda |\mathcal{D}|\frac{\hat c_h^n-\hat c_h^{n-1}}{\tau}\hat{v}_h
+\sum_{i=1}^{N} \int_{s_{i-1}}^{s_i} 
|\mathcal{D}|\hat \kappa \partial_s \hat c_h^n\, \partial_s \hat v_h
-  \sum_{i=1}^{N-1} [|\mathcal{D}| \hat \kappa \partial_s \hat c_h^n \, \hat v_h]_{s_i} \\
+ (|\mathcal{D}| \hat \kappa \partial_s \hat c_h^n\, \hat v_h)\Big|_{s_0} - (|\mathcal{D}| \hat \kappa \partial_s \hat c_h^n \, \hat v_h)\Big|_{s_N}
-\sum_{i=1}^{N}\int_{s_{i-1}}^{s_i} \hat{U} |\mathcal{D}| \hat{c}_h^n \, \partial_s \hat v_h +
\sum_{i=1}^{N-1} \hat{U} [ \hat c_h^n |\mathcal{D}|\hat{v}_h]_{s_{i}} \\
+ \hat U (|\mathcal{D}|\hat c_h^n\, \hat{v}_h)\Big|_{s_N}
- \hat U (|\mathcal{D}|\hat c_h^n\, \hat{v}_h)\Big|_{s_0}
+ \int_{\Lambda} \gamma \vert \partial \mathcal{D}\vert (\hat c_h^n - \overline{c_h^n}) \hat v_h
= \int_\Lambda \hat f(\cdot,t^n) \hat v_h.
\end{align*}
Observe that for $1 \leq i \leq N-1$, since the exact solution belongs to $H^2(\Lambda)$
\begin{align}
       [|D| \hat \kappa \partial_s \hat c_h^n \, \hat v_h]_{s_i} = \{ |D|\hat\kappa \, \partial_s \hat c_h^n \}_{s_i} [\hat v_h]_{s_i}, 
       \quad
 [ \hat c_h^n |\mathcal{D}|\hat{v}_h]_{s_{i}}
 = \vert \mathcal{D}(s_i)\vert \hat c_h^n(s_i^-) [ \hat v_h]_{s_i}.
\end{align}
Using the boundary conditions \eqref{eq:bchat1}, \eqref{eq:bchat2}, we rewrite the equation above as:
\begin{align*}
\int_\Lambda |\mathcal{D}|\frac{\hat c_h^n-\hat c_h^{n-1}}{\tau}\hat{v}_h
+\sum_{i=1}^{N} \int_{s_{i-1}}^{s_i} 
|\mathcal{D}|\hat \kappa \partial_s \hat c_h^n\, \partial_s \hat v_h
-  \sum_{i=1}^{N-1} \{ |D|\hat\kappa \, \partial_s \hat c_h^n \}_{s_i} [\hat v_h]_{s_i}\\
     -\sum_{i=1}^{N}\int_{s_{i-1}}^{s_i} \hat{U} |\mathcal{D}| \hat{c}_h^n \, \partial_s \hat v_h +
     \sum_{i=1}^{N-1} \hat{U}  
     \vert \mathcal{D}(s_i)\vert \hat c_h^n(s_i^-) [ \hat v_h]_{s_i} \\
     + \hat U |\mathcal{D}(L)|\hat c_h^n(L) \, \hat{v}_h(L) 
+ \int_{\Lambda} \gamma \vert \partial \mathcal{D}\vert (\hat c_h^n - \overline{c_h^n}) \hat v_h
= \vert \mathcal{D}(0)\vert \hat U c_h^\mathrm{in} v_h(0) + \int_\Lambda \hat f(\cdot,t^n) \hat v_h.
\end{align*}
To conclude we add to the left-hand side of the equation the symmetrization and penalty terms, that vanish for the exact solution.

\section{Proof of coercivity \eqref{eq:coercivity}}
\label{sec:appd}
This proof follows exactly  the one given in  Section 2.7 of \cite{riviere2008discontinuous} and uses
the following trace inequality. There is a positive constant $C_t$ such that
\begin{equation}\label{eq:trace}
\forall \hat v \in \mathbb{P}_k([s_{i-1},s_i]), \,\, \left\vert \frac{\partial \hat v}{\partial s} (s)\right\vert\leq C_t h_i^{-1/2} \Vert  \frac{\partial \hat v}{\partial s} \Vert_{L^2([s_{i-1},s_i])},
\, s  = s_i, s_{i-1}, 
\end{equation}
where $h_i = s_i-s_{i-1}$. 
Using the trace inequality above and Cauchy-Schwarz's inequality, we obtain:
\begin{align*}
  \{ |D(s)|\frac{\partial \hat v}{\partial s}\}_{s_i} [\hat v]_{s_i}  &\leq \Big( \frac{C_t d_1}{2 h^{1/2}_{i}} \Vert \frac{\partial \hat v}{\partial s}\Vert_{L^2([s_{i-1},s_i])} + \frac{C_t d_1}{2 h_{i+1}^{1/2}} \Vert \frac{\partial  \hat v}{\partial s}\Vert_{L^2([s_i,s_{i+1}])} \Big) \Big(\frac{1}{h_\Lambda}\Big)^{\frac{1}{2}-\frac{1}{2}} [\hat v]_{s_i}.
\end{align*}
By summing over $i$, using Cauchy-Schwarz's inequality and Young's inequality (with $\delta>0$), we obtain:
\begin{align*}
    \sum_{i=1}^{N-1}\{ |D(s)|\frac{\partial \hat v}{\partial s}\}_{s_i} [\hat v]_{s_i} &\leq C_td_1\sqrt{2} \Big( \sum_{i=1}^{N-1} \frac{1}{h_\Lambda} [\hat v]^2_{s_i}\Big)^{1/2} \Big( \sum_{i=1}^{N} \Vert \frac{\partial \hat v}{\partial s} \Vert^2_{L^2([s_{i-1},s_i])}\Big)^{1/2} \\
    &\leq  \frac{\delta}{2}\sum_{i=1}^{N} \Vert |D|^{1/2} \frac{\partial \hat v}{\partial s} \Vert^2_{L^2([s_{i-1},s_i])} + \frac{C_t^2 d_1^2 }{\delta d_0}  \sum_{i=1}^{N-1} \frac{1}{h_\Lambda} [\hat v]^2_{s_i}.
\end{align*}
Using the above inequality we obtain a lower bound for $a_\Lambda$:
\begin{align*}
    a_\Lambda (\hat v, \hat v) \geq \Big(1-\frac{\delta}{2}(1+ \epsilon)\Big)
    \sum_{i=1}^{N} \Vert |D|^{1/2} \frac{\partial \hat v}{\partial s} \Vert^2_{L^2([s_{i-1},s_i])}
     + \sum_{i=1}^{N-1} \frac{\sigma - \frac{C_t^2 d_1^2}{\delta d_0}(1+\epsilon)}{h_\Lambda} [\hat v]^2_{s_i}.
\end{align*}
If $\epsilon = 1$ or $0$, we choose $\delta =\frac{1}{2}$ and $\sigma \geq \frac{4C_t^2 d_1^2 }{ d_0} $, thus we have the coercivity result with $C_a = 1/2$. 
The result is trivially correct if $\epsilon = -1$.

\section{Proof of positivity \eqref{eq:bpositivity}}
\label{sec:appc}
%
We write
\begin{align*}
b_\Lambda(\hat v, \hat v) = & 
-\sum_{i=1}^{N} \int_{s_{i-1}}^{s_i}|\mathcal{D}|\hat U \, \hat v\frac{\partial \hat v}{\partial s} 
    +\sum_{i=1}^{  N-1 } \vert\mathcal{D}(s_i)\vert
    \hat U \, \hat v(s_i^-)  [\hat v]_{s_i} 
     +|\mathcal{D}(L)| \hat U \, (\hat v(L))^2.
\end{align*}
Since $\hat U$ is a constant, we write
\[
 -\int_{s_{i-1}}^{s_i}|\mathcal{D}|\hat U\, \hat v \frac{\partial \hat v}{\partial s} = \frac12 \hat U 
 \int_{s_{i-1}}^{s_i} \frac{d|\mathcal{D}|}{ds} \hat v^2
 + \frac12 \hat U \left( \vert \mathcal{D}(s_{i-1}^+)\vert \, \hat v^2(s_{i-1}^+) -
 \vert \mathcal{D}(s_{i}^-)\vert \, \hat v^2(s_{i}^-)\right).
 \]
We thus obtain since $\vert \mathcal{D}\vert$ is continuous and $d(\vert \mathcal{D}\vert)/ds\geq 0$
\begin{align*}
b_\Lambda(\hat v, \hat v) \geq  &
-\sum_{i=1}^{N-1} \frac12 \hat U \vert \mathcal{D}(s_i)\vert \, [\hat v^2]_{s_i}
+\frac12 \vert \mathcal{D}(0)\vert \hat U (\hat v(0))^2\\
& +\frac12 \vert \mathcal{D}(L)\vert \hat U(L) (\hat v(L))^2
 +\sum_{i=1}^{  N-1 } \hat U \vert \mathcal{D}(s_i)\vert  \hat v|_{s_i^-} [\hat v]_{s_i}\\
 \geq  &
\frac12 \sum_{i=1}^{N-1}  \hat U \vert \mathcal{D}(s_i)\vert ([\hat v]_{s_i})^2
+\frac12 \vert \mathcal{D}(0)\vert \hat U (\hat v(0))^2
 +\frac12 \vert \mathcal{D}(L)\vert \hat U(L) (\hat v(L))^2.
\end{align*}
\section{Bound for term $T_5$}
\label{sec:appe}
We want to prove
\begin{align*}
T_5 
\leq  \frac{\epsilon_5}{2} 
\vert \hat e^n \vert_{\DG}^2
+ \frac{1}{2 \epsilon_5} M h_\Lambda^2 \Vert \hat c(t^n) \Vert_{H^2(\Lambda)}^2.
\end{align*}

We expand the term $T_5$:
\begin{align*}
    a_\Lambda(\hat \xi(t^{n}), \hat e^n) &= \sum_{i=1}^{N} \int_{s_{i-1}}^{s_{i}} |D(s)|\frac{\partial \hat \xi(t^{n})}{\partial s}\frac{\partial \hat e^n}{\partial s}- \sum_{i=1}^{N-1} \left\{ |D(s)|\frac{\partial \hat \xi(t^{n})}{\partial s}\right\}_{s_i} [\hat e^n]_{s_i}\\
    &\quad  -\epsilon  \sum_{i=1}^{N-1}\left\{ |D(s)|\frac{\partial \hat e^n}{\partial s}\right\}_{s_i} [\hat \xi(t^{n})]_{s_i} +\sum_{i=1}^{N-1} \frac{\sigma}{h_\Lambda}[\hat \xi(t^{n})]_{s_i} [\hat e^n]_{s_i}.\\
\end{align*}
Using the Cauchy-Schwarz's inequality, Young's inequality and approximation property~\eqref{eq:approxgradhatc}, 
we have:
\begin{align*}
    \left\vert\sum_{i=1}^{N} \int_{s_{i-1}}^{s_{i}} |D(s)|\frac{\partial \hat \xi(t^{n})}{\partial s}\frac{\partial \hat e^n}{\partial s} \right\vert 
     \leq & 
     \frac{\epsilon_5}{4}\sum_{i=1}^N \left\||D|^{\frac{1}{2}}d_s\hat e^n(s)\right\|^2_{L^2([s_{i-1},s_i])}\nonumber\\
&      +
     \frac{1}{\epsilon_5}\sum_{i=1}^N
     \left\||D|^{\frac{1}{2}}\frac{\partial \hat \xi(t^{n})}{\partial s}\right\|^2_{L^2([s_{i-1},s_i])}\\
     \leq  &\frac{\epsilon_5}{4} |\hat e^n|^2_{\DG} + \frac{M}{\epsilon_5} h^2_\Lambda \Vert\hat c(t^n)\Vert^2_{H^2(\Lambda)}.
\end{align*}
For the second term, using triangle inequality, Young's inequality, trace inequality and approximation properties \eqref{eq:approxgradhatc}, we obtain:
\begin{align*}
\left\vert - \sum_{i=1}^{N-1}\{ |D(s)|\frac{\partial \hat \xi(t^{n})}{\partial s}\}_{s_i} [\hat e^n]_{s_i} \right\vert  \leq &
\frac{\epsilon_5}{4}
\sum_{i=1}^{N-1} \frac{\sigma}{h_\Lambda}
[\hat e^n]_{s_i}^2\\
& 
+ \sum_{i=1}^{N-1} \frac{d_1^2 h_\Lambda}{4 \sigma\epsilon_5}  \left( \left\vert \frac{\partial \hat \xi(t^{n})}{\partial s}(s_{i-1}^+)\right\vert^2+
\left\vert \frac{\partial \hat \xi(t^{n})}{\partial s}(s_{i}^-)\right\vert^2\right)   
\\
\leq  & \frac{\epsilon_5}{4} |\hat e^n|^2_\DG + \frac{M}{\epsilon_5}  h^2_\Lambda \Vert \hat c(t^n) \Vert^2_{H^2(\Lambda)}.
\end{align*}
Similarly, we have for the third term with the trace inequality \eqref{eq:trace}, Cauchy-Schwarz's and Young's inequalities and \eqref{eq:approxL2hatc}, \eqref{eq:approxgradhatc}:
\begin{align*}
    \left\vert  \sum_{i=1}^{N-1}\{ |D(s)|\frac{\partial \hat e^n}{\partial s}\}_{s_i} [\hat \xi(t^{n})]_{s_i} \right\vert \leq  &
    \sum_{i=1}^{N-1} |[\hat \xi(t^n)]_{s_i}| C_t \frac{d_1}{2 h_\Lambda^{1/2}} \left( \left\Vert \frac{\partial \hat e^n}{\partial s}\right\Vert_{L^2([s_{i-1},s_i]}
    +\left\Vert \frac{\partial \hat e^n}{\partial s}\right\Vert_{L^2([s_{i},s_{i+1}]}\right) 
    \\
    \leq & 
    \frac{\epsilon_5}{4} |\hat e^n|^2_\DG + \frac{M}{ \epsilon_5} h^3_\Lambda \Vert \hat c\Vert ^2_{H^2(\Lambda)}.
\end{align*}
For the last term, we use \eqref{eq:approxL2hatc} and \eqref{eq:approxgradhatc}:
\begin{align*}
    \left\vert\sum_{i=1}^{N-1} \frac{\sigma}{h_\Lambda}[\hat \xi(t^{n})]_{s_i} [\hat e^n]_{s_i} \right\vert
    &\leq \frac{\epsilon_5}{4}\sum_{i=1}^{N-1}\frac{\sigma}{h_\Lambda}[\hat e^n]_{s_i}^2
    + \frac{1}{\epsilon_5}\sum_{i=1}^{N-1}\frac{\sigma}{h_\Lambda}[\hat \xi(t^{n})]_{s_i}^2 
    \\
    &\leq \frac{\epsilon_5}{4} |\hat e^n|^2_\DG + M h^2_\Lambda \Vert\hat c\Vert^2_{H^2(\Lambda)}.
\end{align*}
By summing each term above, we obtain the desired bound for $T_5$.



\end{appendices}

\bibstyle{sn-mathphys-num} 
\bibliography{bibliography}

\end{document}